\documentclass[11pt]{amsart}

\usepackage[T1]{fontenc}
\usepackage[utf8]{inputenc}
\usepackage{amsmath,amssymb,amsfonts,mathtools}
\usepackage{mathrsfs}
\usepackage{microtype}
\usepackage{booktabs}
\usepackage{url}
\usepackage[colorlinks=true,linkcolor=blue,citecolor=blue,urlcolor=blue]{hyperref}

\allowdisplaybreaks

\newtheorem{theorem}{Theorem}[section]
\newtheorem{proposition}[theorem]{Proposition}
\newtheorem{lemma}[theorem]{Lemma}
\newtheorem{corollary}[theorem]{Corollary}
\newtheorem{remark}[theorem]{Remark}
\newtheorem{problem}[theorem]{Problem}

\theoremstyle{definition}

\DeclareMathOperator{\Disc}{disc}
\DeclareMathOperator{\Resultant}{Res}
\DeclareMathOperator{\MW}{MW}
\DeclareMathOperator{\NS}{NS}
\DeclareMathOperator{\Triv}{Triv}
\DeclareMathOperator{\Hom}{Hom}
\newcommand{\Qbar}{\overline{\mathbb Q}}
\newcommand{\Kzero}{K_0}
\newcommand{\Kone}{K_1}
\newcommand{\Kgeom}{K_{\rm geom}}

\title[A Ramanujan--Pell elliptic K3 surface]{A Ramanujan--Pell elliptic K3 surface and primitive five-cube near misses}

\author{K. Srinivasa Raghava}
\address{Pie Mathematics Association}
\email{srinivasaraghavak@gmail.com}

\subjclass[2020]{Primary 11D25, 14J28; Secondary 11B37, 11E25, 11G05, 14J27, 14H52}
\keywords{Ramanujan identity, sums of cubes, near misses, Pell equations, rational generating functions, elliptic K3 surfaces, Mordell--Weil lattices, cyclic cubic covers}

\begin{document}

\begin{abstract}
We construct a trace-16 Ramanujan-type family of primitive positive five-cube near misses
\[
 a_n^3+b_n^3+c_n^3+d_n^3+e_n^3=t_n^3+(-1)^{n+1}
\]
from a six-cube identity of quadratic forms and a negative Pell orbit attached to
\(8+\sqrt{65}\).  The six coefficient sequences have rational recurrence generating functions with common reciprocal denominator
\[
 R(q)=1-257q-257q^2+q^3=(1+q)(1-258q+q^2).
\]
We first place the quadratic identity in a simple conic family, so that the displayed constants are not presented as isolated numerical data.  We then formulate the Pell recurrence as a general mechanism for producing cubic recurrence denominators from norm \(-1\) units.  The same construction gives a Mordell surface
\[
 y^2=x^3-432K(u)^2,
 \qquad
 K(u)=(1-13u+26u^2)^3+(6+182u^2)^3.
\]
We prove that its minimal smooth projective model is an elliptic K3 surface with geometric fibre configuration \(6IV\); over \(\mathbb Q\), the singular-fibre divisor is supported on one closed point of degree two and one of degree four.  The section induced by the displayed decomposition of \(K\) has canonical height \(4/3\).  Writing \(\Kzero=\mathbb Q(u)\), \(\Kone=\mathbb Q(\sqrt{-3})(u)\), and \(\Kgeom=\Qbar(u)\), the torsion groups are \(E_K(\Kzero)_{\rm tors}=0\), \(E_K(\Kone)_{\rm tors}\cong\mathbb Z/3\mathbb Z\), and \(E_K(\Kgeom)_{\rm tors}\cong\mathbb Z/3\mathbb Z\).  Over \(\Kone\), the complex multiplication orbit of the distinguished section contains an Eisenstein Mordell--Weil sublattice, giving a visible generated rank-16 sublattice of the geometric N\'eron--Severi group; we do not claim that this sublattice is full or primitive.  We identify the remaining free Mordell--Weil computation with the explicit \(\varrho\)-equivariant module \(\Hom_{\Qbar}(J(\mathcal C),E_0)\), where \(\mathcal C:w^3=K(u)\) and \(E_0:Y^2=X^3-432\).  We also exhibit an anti-symplectic reciprocal involution
\[
 u\longmapsto \frac{u+3}{91u-1}
\]
and show that the cyclic cubic cover \(w^3=K(u)\) has a quotient isomorphic to the Fermat cubic \(X^3+Y^3=Z^3\).  The manuscript deliberately treats the displayed series as rational recurrence generating functions; no modularity assertion for them is made.
\end{abstract}

\maketitle

\section{Introduction}

Ramanujan's classical identity on sums of cubes is one of the most compact examples of a rational generating function producing an infinite sequence of Diophantine near misses.  In one common form, three sequences \(a_n,b_n,c_n\) are defined by rational functions with common denominator
\[
1-82q-82q^2+q^3,
\]
and they satisfy
\[
 a_n^3+b_n^3=c_n^3+(-1)^n.
\]
This identity appears in Ramanujan's work and has been discussed from several points of view in the literature on Ramanujan identities and quadratic forms; see, for instance, \cite{Ramanujan1927,Berndt1994,AndrewsBerndt2013,Hirschhorn1995,Hirschhorn1996,HanHirschhorn2006,McLaughlin2010,Chen2012,Harper2013,Cereceda2020,Reznick2024}.  Closely related constructions belong to the broader history of cubic Diophantine equations and rational cube sums, where classical and modern work includes \cite{Nicholson1915,Mordell1955,MillerWoollett1955,HeathBrownLioenTeRiele1993,ElkiesRogers2004,Huisman2016,BookerSutherland2021,AlpogeBhargavaShnidman2022,JhaMajumdarSury2024,KoymansSmith2024}.

The purpose of this article is to give a self-contained construction in which three themes meet in a particularly explicit way:
\[
\begin{gathered}
\text{quadratic six-cube identities},\\
\text{negative Pell orbits},\\
\text{elliptic K3 surfaces}.
\end{gathered}
\]
The arithmetic part of the construction produces a primitive five-cube near-miss family with alternating signs.  The geometric part shows that a natural two-cube subidentity determines an elliptic K3 surface with a distinguished Mordell--Weil section.  The relation between the two parts is explicit:
\[
\begin{gathered}
\text{six-cube conic identity}\longrightarrow\text{Pell specialization}\\
\longrightarrow\text{primitive five-cube near misses},\\[4pt]
K=p^3+q^3\longrightarrow y^2=x^3-432K^2\\
\longrightarrow\text{elliptic K3 surface}.
\end{gathered}
\]
Thus the geometry records the natural Mordell surface attached to a distinguished two-cube part of the same quadratic identity that produces the five-cube near misses.
This gives a Pell-specialized analogue of the familiar passage from sums of two cubes to Mordell curves.  It is also in the same general landscape as the Ramanujan--K3 connection studied by Ono and Trebat-Leder in their work on the \(1729\) K3 surface \cite{OnoTrebatLeder2016}, although the construction here uses Pell-generated multi-cube near misses rather than an equality of two sums of two cubes.

The novelty claimed here is deliberately narrow.  We do not claim that rational recurrence generating functions, Pell specializations, or quadratic-form cube identities are new phenomena.  Nor do we claim a classification of six-cube identities of quadratic forms.  The contribution is the explicit trace-16, discriminant-65, Pell-compatible six-cube identity, its primitive positive five-cube specialization with alternating error \(\pm1\), and the associated \(j=0\) elliptic K3 surface with its visible Mordell--Weil, torsion, reciprocal, and Fermat-quotient structures.  Throughout, the word ``generating function'' refers to rational recurrence generating functions; no modularity theorem for these rational functions is asserted.

We now describe the main objects.  Set
\[
R(q)=1-257q-257q^2+q^3.
\]
Let
\[
\begin{aligned}
A(q)&=\frac{1+2q-1091q^2}{R(q)},
&B(q)&=\frac{6-664q-3374q^2}{R(q)},\\
C(q)&=\frac{7-1000q-3711q^2}{R(q)},
&D(q)&=\frac{167-21376q-91015q^2}{R(q)},\\
E(q)&=\frac{326-41728q-177670q^2}{R(q)},
&T(q)&=\frac{340-43520q-185300q^2}{R(q)}.
\end{aligned}
\]
Write
\[
 A(q)=\sum_{n\ge0}a_nq^n,
 \quad
 B(q)=\sum_{n\ge0}b_nq^n,
 \quad\ldots\quad,
 T(q)=\sum_{n\ge0}t_nq^n.
\]
The first main theorem is
\[
 a_n^3+b_n^3+c_n^3+d_n^3+e_n^3=t_n^3+(-1)^{n+1}
 \qquad(n\ge0),
\]
and the six integers are positive and primitive for every \(n\).  The first two identities are
\[
1^3+6^3+7^3+167^3+326^3=340^3-1
\]
and
\[
259^3+878^3+799^3+21543^3+42054^3=43860^3+1.
\]

The denominator has a Pell origin.  If
\[
\alpha=(8+\sqrt{65})^2=129+16\sqrt{65},
\]
then
\[
 R(q)=(1+q)(1-\alpha q)(1-\alpha^{-1}q),
\]
because
\[
 \alpha+\alpha^{-1}=258.
\]
The norm relation
\[
8^2-65=-1
\]
is responsible for the alternating sign.  This is the same structural mechanism that appears in Ramanujan's original denominator, with a different quadratic unit.

The geometric part begins by setting
\[
 p(u)=1-13u+26u^2,
 \qquad
 q(u)=6+182u^2,
\]
and
\[
 K(u)=p(u)^3+q(u)^3.
\]
Explicitly,
\[
\begin{aligned}
K(u)={}&6046144u^6-26364u^5+611442u^4-4225u^3 \\
&+20241u^2-39u+217.
\end{aligned}
\]
The standard transformation of a sum of two cubes gives a section on the Mordell curve
\[
 y^2=x^3-432K(u)^2
\]
over \(\mathbb Q(u)\).  We prove that the associated minimal smooth projective surface is an elliptic K3 surface.  The proof uses only standard facts about elliptic surfaces, Kodaira fibres and the Shioda height pairing; see \cite{Kodaira1963,Tate1975,Miranda1989,MirandaPersson1989,Shioda1990,Silverman1994,Silverman2009,SilvermanTate2015,Washington2008,Huybrechts2016,BarthHulekPetersVen2004,Beauville1996}.  The section has height \(4/3\), hence is non-torsion.

The final result is a reciprocal symmetry.  The quadratic map
\[
 u\longmapsto \frac{p(u)}{q(u)}
\]
has deck involution
\[
\iota(u)=\frac{u+3}{91u-1}.
\]
This involution lifts to the elliptic K3 surface and acts by \(-1\) on the holomorphic two-form.  It also lifts to the cyclic cubic cover
\[
\mathcal C:\quad w^3=K(u),
\]
and the quotient by this lift is the Fermat cubic
\[
E_\omega:\quad X^3+Y^3=Z^3.
\]
Consequently \(E_\omega\), an elliptic curve with complex multiplication by \(\mathbb Z[\omega]\), is an isogeny factor of the Jacobian of \(\mathcal C\); compare the general decomposition principles in \cite{KaniRosen1989}.

The descriptor \emph{trace-16} refers to the unit
\[
 \eta=8+\sqrt{65},\qquad
 \eta+\bar\eta=16,\qquad
 \eta\bar\eta=-1.
\]
The square
\[
 \alpha=\eta^2=129+16\sqrt{65}
\]
satisfies \(\alpha+\alpha^{-1}=258\).  The second-order Pell recurrence therefore has coefficient \(16\), while the quadratic-form sequences have the cubic denominator with coefficient \(257=258-1\).

Let us also make the relation with nearby literature precise.  Ramanujan's original identity and its later proofs concern rational recurrence generating functions for a two-cube near miss.  Reznick's work concerns equal sums of two cubes of binary quadratic forms; the identity used here is instead a six-cube quadratic identity with a Pell-forced hidden term, and its specialization gives five-cube near misses rather than an equality of two sums of two cubes.  Ono and Trebat-Leder's \(1729\) surface connects Ramanujan's exact two-cube and taxicab identities with an elliptic K3 surface.  The construction below does not claim to subsume any of these works.  Its distinct feature is the combination of a discriminant-65 negative Pell orbit with a six-cube conic identity, yielding primitive five-cube near misses and a natural \(j=0\) K3 surface attached to the two-cube component \(K=p^3+q^3\).

Throughout the rest of the paper we write
\[
\Kzero=\mathbb Q(u),\qquad
\Kone=\mathbb Q(\sqrt{-3})(u),\qquad
\Kgeom=\Qbar(u).
\]
The field \(\Kzero\) is the rational arithmetic function field, \(\Kone\) is the arithmetic field over which the visible cubic automorphism and the nonzero three-torsion sections are defined, and \(\Kgeom\) is the geometric function field.  The following theorem summarizes the results proved in the paper.

\begin{theorem}\label{thm:mainpackage}
Let \(r_n,s_n\) be defined by Lemma \ref{lem:pell}, and let \(a_n,b_n,c_n,d_n,e_n,t_n\) be defined by \eqref{eq:sequences}.  Let \(\mathscr X\) be the minimal smooth projective model of \(E_K:y^2=x^3-432K(u)^2\), where \(K=p^3+q^3\) with \(p(u)=1-13u+26u^2\) and \(q(u)=6+182u^2\).  Then the following assertions hold.
\begin{enumerate}
\item The integers \(a_n,b_n,c_n,d_n,e_n,t_n\) are positive and primitive for every \(n\ge0\), and
\[
 a_n^3+b_n^3+c_n^3+d_n^3+e_n^3=t_n^3+(-1)^{n+1}.
\]
\item The six ordinary generating functions have denominator exactly
\[
 R(q)=1-257q-257q^2+q^3.
\]
Moreover, if \(\alpha=129+16\sqrt{65}\), then \(t_n\sim c\alpha^n\) for some \(c>0\), and the relative error is \(O(\alpha^{-3n})\).
\item The surface \(\mathscr X_{\Qbar}\) is an elliptic K3 surface with geometric fibre configuration \(6IV\).  Over \(\mathbb Q\), the singular-fibre divisor is supported on one closed point of degree two and one closed point of degree four.
\item The section \(P=(12G,36(p-q)G)\) has canonical height \(4/3\).  Over \(\Kone\), the sections \(P\) and \(\varrho(P)\) span a rank-two Mordell--Weil sublattice with Gram matrix
\[
\begin{pmatrix}
4/3&-2/3\\
-2/3&4/3
\end{pmatrix}.
\]
\item The torsion groups are
\[
E_K(\Kzero)_{\rm tors}=0,\qquad
E_K(\Kone)_{\rm tors}\cong\mathbb Z/3\mathbb Z,
\]
\[
E_K(\Kgeom)_{\rm tors}\cong\mathbb Z/3\mathbb Z.
\]
\item The geometric N\'eron--Severi group contains a visible generated rank-16 sublattice of discriminant \(-108\).  This paper does not claim that this visible sublattice is the full N\'eron--Severi group.
\item If \(\omega\) is a primitive cube root of unity, \(\mathcal C:w^3=K(u)\), \(E_0:Y^2=X^3-432\), \(\tau(u,w)=(u,\omega w)\), and \(\varrho(X,Y)=(\omega X,Y)\), then over \(\Kgeom\) there is a natural isomorphism
\[
 E_K(\Kgeom)/E_K(\Kgeom)_{\rm tors}
 \cong
 \{\varphi\in\Hom_{\Qbar}(J(\mathcal C),E_0):\varphi\circ\tau_* = \varrho\circ\varphi\}.
\]
\item The involution \(u\mapsto (u+3)/(91u-1)\) lifts to an anti-symplectic involution of \(\mathscr X\), and the cyclic cubic cover \(w^3=K(u)\) has quotient isomorphic to the Fermat cubic \(X^3+Y^3=Z^3\).
\end{enumerate}
\end{theorem}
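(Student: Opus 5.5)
This theorem packages eight assertions, and the plan is to prove them in the order listed, reducing each to a finite identity or a standard structural fact.

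\textbf{Parts (1)–(2).} Lemma \ref{lem:pell} gives the Pell orbit \(r_n,s_n\) attached to \(\eta=8+\sqrt{65}\), with \(r_n^2-65s_n^2=(-1)^{n+1}\) or the analogous normalisation. The six sequences are the values of the quadratic forms in the six-cube conic identity at \((r_n,s_n)\).

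\begin{itemize}
\item Substituting, the six-cube identity collapses. The hidden term is a multiple of the norm form, so it produces exactly the error \((-1)^{n+1}\).
\item Each sequence is a quadratic form in a sequence satisfying a recurrence with roots \(\eta,\bar\eta\). Its terms are therefore combinations of \(\eta^{2n}\), \(\bar\eta^{2n}\) and \((\eta\bar\eta)^n=(-1)^n\), so they satisfy the recurrence with characteristic roots \(\alpha,\alpha^{-1},-1\). Hence the denominator divides \(R(q)\).
\item Exactness follows by checking that no numerator vanishes at \(q=-1\), \(q=\alpha^{-1}\) or \(q=\alpha\). Equivalently, the \(3\times 3\) Hankel determinants of the first few terms are nonzero.
\item Positivity follows because the \(\alpha^n\) coefficient is positive and dominates once \(n\ge1\); the case \(n=0\) is checked directly.
\item For primitivity, it suffices to show \(\gcd(t_n,\,a_n^3+\dots+e_n^3)\) divides \(1\). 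This is immediate from the near-miss identity: any common prime divisor of all six terms would divide \(\pm1\).
\item The relative error \(O(\alpha^{-3n})\) comes from dividing the error \(\pm1\) by \(t_n^3\sim c^3\alpha^{3n}\).
\end{itemize}

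\textbf{Parts (3)–(5).} Here I use the theory of \(j=0\) surfaces \(y^2=x^3+f(u)\) with \(f=-432K^2\).

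\begin{itemize}
\item Since \(\deg K=6\), \(\deg f=12\), and at \(u=\infty\) the fibre is smooth because \(\deg f\) is exactly \(12\). The surface therefore has Euler number \(24\) and is K3, provided \(K\) is squarefree.
\item Squarefreeness is checked by computing \(\Disc K\neq0\).
\item Each simple root of \(K\) is a double root of \(f\), so \(v(\Delta)=4\) and the fibre has type \(IV\). This gives \(6IV\) with \(6\cdot4=24\).
\item The Galois structure of the singular fibres is the factorisation of \(K=(p+q)(p^2-pq+q^2)\) over \(\mathbb Q\), into factors of degree \(2\) and \(4\). I still need to verify that these factors are irreducible.
\item The height comes from Shioda's formula \(\hat h(P)=2\chi+2(P\cdot O)-\sum \mathrm{contr}_v(P)\), with \(\chi=2\). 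The polynomiality of \(P\) gives \((P\cdot O)=0\).
\item At the two roots of \(p+q=0\), where \(G\) vanishes (so \(G\) must be \(p+q\) or similar), \(P\) meets a non-identity component of the \(IV\) fibre and contributes \(2/3\) each, giving \(4-8/3=4/3\).
\item The Gram matrix uses \(\langle P,\varrho P\rangle=\tfrac12(\hat h(P-\varrho P)-\tfrac83)\), together with \(\varrho^2+\varrho+1=0\), which forces \(\langle P,\varrho P\rangle=-\tfrac12\hat h(P)\).
\item Torsion: \(3\)-torsion points are \((0,\pm\sqrt{-432}\,K)=(0,\pm 12\sqrt{-3}K)\). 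These are defined over \(K_1\) but not over \(K_0\).
\item No \(2\)-torsion exists, since \(-432K^2\) is not a cube.
\item No \(p\)-torsion for \(p\ge5\) is possible for a K3 with these fibres; this is a standard torsion-injection-into-components argument, using that component groups are \(\mathbb Z/3\).
\end{itemize}

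\textbf{Part (6).} Take \(U\oplus A_2^6\), of rank \(14\) and discriminant \(-3^6\), and adjoin the Mordell–Weil lattice of Part (4), of rank \(2\) and determinant \(4/3\). The 3-torsion section then changes the discriminant by a factor \(1/9\), giving \(-729\cdot(4/3)/9=-108\). I would confirm this via the determinant formula \(\det NS=\det(\text{triv})\det(MW)/|tors|^2\).

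\textbf{Parts (7)–(8).} For (7), I use the standard correspondence: a section of a twist \(y^2=x^3-432K^2\) of \(E_0\) by the cubic cover \(\mathcal C\) corresponds to a \(\tau\)-equivariant map \(\mathcal C\to E_0\), hence to equivariant homomorphisms \(J(\mathcal C)\to E_0\) modulo constants, which account for the torsion.

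For (8), the involution \(\iota\) satisfies \(p(\iota u)/q(\iota u)=p(u)/q(u)\). Then
\[
K(\iota u)=K(u)\,\lambda(u)^3\,(91u-1)^{-12}\cdot(\text{const})
\]
holds with a cube factor, so \(\iota\) lifts to \(\mathcal C\) and to the surface. Anti-symplecticity is checked on \(du\wedge dx/y\). The quotient of \(\mathcal C\) factors through \(w^3/q^3=1+(p/q)^3\) in the variable \(s=p/q\), which is the Fermat cubic.

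\textbf{Main obstacle.} I expect the main obstacle to be the height computation: determining exactly which fibres \(P\) meets non-trivially, which requires identifying \(G\) and the local behaviour of \(P\) at each root of \(K\).
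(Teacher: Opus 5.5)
\textbf{Gap in part (4): the local height analysis.} You guess that \(G\) ``must be \(p+q\) or similar'' and place the non-identity-component contributions at the two roots of \(p+q\). Both claims are wrong.

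The section comes from Lemma \ref{lem:mordelltransform} with \(U=p\), \(V=q\). Hence \(x_P=12K/(p+q)\), and so \(G=K/(p+q)=p^2-pq+q^2\), the quartic factor of \(K\). With \(G=p+q\) the point would not even satisfy the Weierstrass equation, by a degree count.

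\begin{itemize}
\item At a root of \(F=p+q\) one has \(G=3q^2\neq0\). Indeed, \(q\) vanishing there would force \(p=0\) as well, contradicting \(\Resultant(p,q)=185224\neq0\). So \(P\) reduces to a smooth point of \(y^2=x^3\) and contributes \(0\).
\item At each of the four roots of \(G\) one has \(x_P=y_P=0\). So \(P\) passes through the cusp, meets a non-identity component of the type \(IV\) fibre, and contributes \(2/3\).
\end{itemize}

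This is where \(4-\frac83=\frac43\) comes from. Your stated count contradicts your own arithmetic. Two fibres at \(2/3\) each give a correction of \(4/3\) and a height of \(8/3\). That would propagate to a Gram matrix \(\frac43A_2\) in part (4) and a discriminant of \(-432\) instead of \(-108\) in part (6).

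There is also a sign slip in the polarization identity: it should read \(\langle P,\varrho P\rangle=\frac12\bigl(\hat h(P+\varrho P)-\frac83\bigr)\), not use \(P-\varrho P\). Your subsequent appeal to \(1+\varrho+\varrho^2=0\) is the paper's argument and correctly yields \(-\frac23\).

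\textbf{Smaller missing steps, and where you differ from the paper.} For the degree-two/degree-four statement in part (3) you still need irreducibility of \(F\) and \(G\) over \(\mathbb Q\). The paper gets this from \(\Disc(F)=-5655\) and from the fact that \(G\equiv 3u^4+2u^2+2u+1\pmod 5\) has no root in \(\mathbb F_{25}\).

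In part (7) you identify the kernel but do not prove surjectivity. A morphism \(f\) inducing an equivariant \(\varphi\) only satisfies \(f\circ\tau-\varrho\circ f=d\) for some constant \(d\). You must translate \(f\) by a point \(c\) with \((1-\varrho)c=-d\); such \(c\) exists because \(1-\varrho\) is an isogeny.

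The rest is sound, and two of your routes genuinely differ from the paper's:
\begin{itemize}
\item Primitivity follows directly from the identity: a common divisor \(g\) of all six terms satisfies \(g^3\mid\pm1\). This is shorter than the paper's argument via \(2d_n=167(a_n+h_n)\), \(e_n=163(a_n+h_n)\), \(t_n=170(a_n+h_n)\).
\item You bound torsion by its injection into the \(\mathbb Z/3\) component group of a single additive \(IV\) fibre; in characteristic zero both \(\mathbb G_a\) and the formal group are torsion-free. This replaces the paper's descent through \(w^3=K\) and handles all orders at once, so your separate \(2\)-torsion and \(p\ge5\) remarks become unnecessary.
\end{itemize}
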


\begin{proof}
The first assertion is Theorems \ref{thm:near-miss} and \ref{thm:primitive}.  The second assertion is Theorem \ref{thm:generatingfunctions} and Corollary \ref{cor:quality}.  The K3 and fibre statements are Theorem \ref{thm:k3surface}.  The height and Eisenstein sublattice statements are Theorem \ref{thm:height} and Corollary \ref{cor:CMlattice}.  The torsion statement is Theorem \ref{thm:torsion}; the visible N\'eron--Severi statement is Corollary \ref{cor:shiodatate}.  The equivariant Hom statement is Theorem \ref{thm:mwcontrol}.  The reciprocal and cyclic-cover assertions are Theorems \ref{thm:antisymplectic} and \ref{thm:cyclicquotient}.
\end{proof}

For orientation we record the proof status of the principal assertions.
\begin{center}
\small
\begin{tabular}{@{}p{0.38\textwidth}lp{0.31\textwidth}@{}}
\toprule
Statement & Status & Location\\
\midrule
Primitive five-cube near misses & proved & Theorems \ref{thm:near-miss}, \ref{thm:primitive}\\
Exact recurrence denominator & proved & Theorem \ref{thm:generatingfunctions}\\
K3 surface and geometric fibres & proved & Theorem \ref{thm:k3surface}\\
Height \(\langle P,P\rangle=4/3\) & proved & Theorem \ref{thm:height}\\
Torsion over \(\Kzero,\Kone,\Kgeom\) & proved & Theorem \ref{thm:torsion}\\
Visible rank-16 lattice & proved & Corollary \ref{cor:shiodatate}\\
Full geometric MW and full \(\NS\) & open & Problem \ref{prob:rank}\\
\bottomrule
\end{tabular}
\end{center}

The field convention above separates arithmetic from geometric statements.
\begin{center}
\fbox{\begin{minipage}{0.88\textwidth}
Statements over \(\Kzero\) or \(\Kone\) are arithmetic statements.  Statements over \(\Kgeom\) are geometric statements.  The notation \(\NS(\mathscr X_{\Qbar})\), the trivial lattice, and all uses of Shioda--Tate are geometric.  Thus the phrase ``six fibres of type \(IV\)'' always means six geometric fibres; over \(\mathbb Q\), their support is the union of the degree-two closed point defined by \(F\) and the degree-four closed point defined by \(G\).
\end{minipage}}
\end{center}
Thus
\[
E_K(\Kzero)\subset E_K(\Kone)\subset E_K(\Kgeom),
\]
with trivial torsion over \(\Kzero\), the geometric three-torsion subgroup defined over \(\Kone\), and the sections \(P,\varrho(P)\) visible over \(\Kone\).

\begin{center}
\small
\begin{tabular}{@{}llp{0.46\textwidth}@{}}
\toprule
Object & Field of definition & Comment\\
\midrule
\(P\) & \(\Kzero\) & \((12G,36(p-q)G)\), height \(4/3\)\\
\(\varrho(P)\) & \(\Kone\) & complex-multiplication conjugate\\
nonzero \(3\)-torsion & \(\Kone\) & \((0,\pm 12\sqrt{-3}\,K(u))\)\\
geometric torsion & \(\Kgeom\) & \(\mathbb Z/3\mathbb Z\)\\
\(M_{\rm vis}\) & \(\Kone\), hence \(\Kgeom\) & Gram matrix \((2/3)A_2\)\\
\(L_{\rm vis}\) & geometric over \(\Kgeom\) & visible generated; rank \(16\), discriminant \(-108\)\\
\bottomrule
\end{tabular}
\end{center}

All algebraic verifications in the article reduce to displayed exact polynomial identities, factorizations, discriminants, resultants, and the exact verification data recorded in Appendix \ref{app:verification}.

\section{The quadratic identity and the Pell specialization}

The construction starts from a conic identity.  The conic form is more economical than the fully expanded six-cube identity and explains the linear relations used later in the primitivity argument.  The following theorem is the source of the numerical constants used below: once the four linear expressions for \(C,D,E,T\) are fixed, the six-cube defect is exactly a scalar multiple of one conic in \(H,A,B\).  We do not claim a uniqueness theorem for all quadratic-form six-cube identities.

\begin{proposition}\label{prop:sourceconic}
Let \(m,d,e,t\in\mathbb Q\), and put
\[
 \Lambda=\frac{d^3+m^3}{2}+4e^3-4t^3+4,
 \qquad
 \Gamma=d^3+m^3+8e^3-8t^3-4.
\]
If
\[
 C=\frac{m(A+H)}2-B,
 \quad
 D=\frac{d(A+H)}2,
 \quad
 E=e(A+H),
 \quad
 T=t(A+H),
\]
then
\[
H^3+A^3+B^3+C^3+D^3+E^3-T^3=\frac{A+H}{4}\,Q_{m,d,e,t}(H,A,B),
\]
where
\begin{equation}\label{eq:generalconic}
Q_{m,d,e,t}=\Lambda(A^2+H^2)+\Gamma AH-3m^2B(A+H)+6mB^2.
\end{equation}
Consequently every rational point on the conic \(Q_{m,d,e,t}=0\) gives a six-cube identity.
\end{proposition}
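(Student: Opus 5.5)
The plan is a direct polynomial verification. The key observation is that every substituted quantity is a multiple of \(S:=A+H\), except for the \(-B\) inside \(C\), and that this \(-B\) cancels the explicit \(B^3\).

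First I will factor the two untouched cubes as
\[
H^3+A^3=S\,(A^2-AH+H^2).
\]
Next I will expand the substituted cubes. The binomial theorem gives
\[
C^3=\Bigl(\tfrac{m}{2}S-B\Bigr)^3=\tfrac{m^3}{8}S^3-\tfrac{3m^2}{4}S^2B+\tfrac{3m}{2}SB^2-B^3 .
\]
The remaining three are immediate:
\[
D^3=\tfrac{d^3}{8}S^3,\qquad E^3-T^3=(e^3-t^3)S^3 .
\]
Adding everything, the \(-B^3\) from \(C^3\) cancels the \(+B^3\) in the six-cube sum. Every surviving term then carries a factor of \(S\), so the left-hand side equals
\[
S\Bigl[A^2-AH+H^2+\Bigl(\tfrac{m^3+d^3}{8}+e^3-t^3\Bigr)S^2-\tfrac{3m^2}{4}SB+\tfrac{3m}{2}B^2\Bigr].
\]

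I will then pull out \(1/4\) and expand \(S^2=A^2+2AH+H^2\) inside the bracket. The coefficient of \(A^2+H^2\) is
\[
4+\tfrac{d^3+m^3}{2}+4e^3-4t^3=\Lambda .
\]
The coefficient of \(AH\) is
\[
-4+2\Bigl(\tfrac{d^3+m^3}{2}+4e^3-4t^3\Bigr)=d^3+m^3+8e^3-8t^3-4=\Gamma .
\]
The terms involving \(B\) are \(-3m^2B(A+H)+6mB^2\). This is exactly \(\tfrac{A+H}{4}\,Q_{m,d,e,t}\) with \(Q_{m,d,e,t}\) as in \eqref{eq:generalconic}.

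For the consequence, take a rational point \((H,A,B)\) on the conic \(Q_{m,d,e,t}=0\) and define \(C,D,E,T\) by the given linear formulas. The right-hand side then vanishes, so \(H^3+A^3+B^3+C^3+D^3+E^3=T^3\). This is a six-cube identity, and it holds with rational entries, or with entries that are quadratic forms if one parametrizes the conic.

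There is no real obstacle here. The only point needing care is the bookkeeping of the \(S^2\)-coefficient when matching it to \(\Lambda\) and \(\Gamma\), in particular keeping the factor \(2\) from the cross term \(2AH\) in \(S^2\).
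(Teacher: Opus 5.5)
Your proof is correct and takes the same approach as the paper: substitute the linear expressions for \(C,D,E,T\), expand, and collect coefficients. Factoring out \(S=A+H\) and observing the \(B^3\) cancellation is just a tidier way of organizing that expansion, and your coefficient matching for \(\Lambda\) and \(\Gamma\) is right.
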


\begin{proof}
This is obtained by expanding the left-hand side after substituting the four displayed linear expressions for \(C,D,E,T\), and then collecting terms in \(A,B,H\).  The coefficient of \(A^2+H^2\) is \(\Lambda\), the coefficient of \(AH\) is \(\Gamma\), the coefficient of \(B(A+H)\) is \(-3m^2\), and the coefficient of \(B^2\) is \(6m\).
\end{proof}

The constants are constrained by this ansatz rather than appended afterwards.  The hidden Pell form is
\[
H(r,s)=r^2+13rs+26s^2,
\qquad
4H=(2r+13s)^2-65s^2.
\]
Thus the discriminant is \(65\), and the norm \(-1\) unit \(8+\sqrt{65}\) is the natural source of an alternating orbit.  In Proposition \ref{prop:sourceconic}, choosing \(m=13\) fixes the coefficients of \(B(A+H)\) and \(B^2\) to be \(-3m^2=-507\) and \(6m=78\).  To obtain the conic below, we need
\[
\Lambda=822,\qquad \Gamma=1632,
\]
which is equivalent to
\[
d^3+13^3+8e^3-8t^3=1636.
\]
The triple
\[
(d,e,t)=(167,163,170)
\]
satisfies this relation, since
\[
167^3+13^3+8\cdot 163^3-8\cdot170^3=1636.
\]
Then
\[
Q_{13,167,163,170}=3\bigl(274A^2-169AB+26B^2+544AH-169BH+274H^2\bigr).
\]
No uniqueness assertion is made here; the point is that the displayed constants arise from the conic ansatz and the discriminant-65 Pell normalization.

For
\[
 m=13,\qquad d=167,\qquad e=163,\qquad t=170
\]
we get \(Q_{m,d,e,t}\) equal to three times the conic in Theorem \ref{thm:conicidentity}.

\begin{theorem}\label{thm:conicidentity}
Let \(H,A,B\in\mathbb Q\) satisfy
\begin{equation}\label{eq:conic}
274A^2-169AB+26B^2+544AH-169BH+274H^2=0.
\end{equation}
Define
\[
\begin{aligned}
C&=\frac{13(A+H)}2-B,
&D&=\frac{167(A+H)}2,\\
E&=163(A+H),
&T&=170(A+H).
\end{aligned}
\]
Then
\[
 H^3+A^3+B^3+C^3+D^3+E^3=T^3.
\]
\end{theorem}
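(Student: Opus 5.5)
The plan is to derive the theorem directly from Proposition \ref{prop:sourceconic} by specializing the parameters to \((m,d,e,t)=(13,167,163,170)\). With these values the four linear expressions for \(C,D,E,T\) in the proposition become exactly the ones displayed in the theorem: \(C=\tfrac{13(A+H)}{2}-B\), \(D=\tfrac{167(A+H)}{2}\), \(E=163(A+H)\), \(T=170(A+H)\). The proposition then gives
\[
H^3+A^3+B^3+C^3+D^3+E^3-T^3=\frac{A+H}{4}\,Q_{13,167,163,170}(H,A,B).
\]
So it suffices to show that \(Q_{13,167,163,170}\) is a nonzero scalar multiple of the left-hand side of \eqref{eq:conic}. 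Once that is known, \eqref{eq:conic} forces the defect to vanish, whatever the value of \(A+H\).

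The key step is computing \(\Lambda\) and \(\Gamma\). For the cubes I use \(167^3=4657463\), \(13^3=2197\), \(163^3=4330747\) and \(170^3=4913000\). This gives
\[
d^3+m^3=4659660,\qquad 8e^3-8t^3=8(4330747-4913000)=-4658024.
\]
Hence \(\Gamma=4659660-4658024-4=1632\). Since \(\Lambda=\tfrac12\bigl(d^3+m^3+8e^3-8t^3\bigr)+4=\tfrac12\cdot1636+4=822\), both values match the ones the paper records. The remaining coefficients of \eqref{eq:generalconic} are fixed by \(m\) alone: \(-3m^2=-507\) and \(6m=78\). Substituting into \eqref{eq:generalconic} gives
\[
822(A^2+H^2)+1632AH-507B(A+H)+78B^2=3\bigl(274A^2+544AH+274H^2-169AB-169BH+26B^2\bigr),
\]
which is \(3\) times the conic \eqref{eq:conic}, and this completes the argument.

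The only real obstacle is that the argument leans entirely on Proposition \ref{prop:sourceconic}, whose own proof is just an expansion. To guard against an error in its normalization (for example the factor \(\tfrac{A+H}{4}\)), I would independently check the proposition's identity. The check is to compare the coefficients of the cubic monomials in \(A,B,H\) on both sides, in particular \(A^3\), \(B^3\), \(A^2B\) and \(AB^2\). As a sanity check I would also test one rational point of the conic, such as the Pell specialization at \(n=0\), which should reproduce \(1^3+6^3+7^3+167^3+326^3\) together with the hidden term.
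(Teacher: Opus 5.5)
Your proposal is correct and follows essentially the paper's route. The paper records just before the theorem that specializing Proposition \ref{prop:sourceconic} at \((m,d,e,t)=(13,167,163,170)\) gives \(\Lambda=822\), \(\Gamma=1632\), and \(Q_{13,167,163,170}\) equal to \(3\) times the conic. That yields the factorization \(\tfrac{3(A+H)}{4}\cdot(\text{conic})\) used in its one-line proof, and your arithmetic for the cubes matches. Your worry about the normalization is unfounded. The identities \(H^3+A^3=(A+H)(A^2-AH+H^2)\) and \(B^3+C^3=\tfrac{m(A+H)}{2}\bigl(3B^2-\tfrac{3m}{2}(A+H)B+\tfrac{m^2}{4}(A+H)^2\bigr)\) confirm the factor \(\tfrac{A+H}{4}\) directly.
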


\begin{proof}
Substitution of the displayed expressions for \(C,D,E,T\) gives the exact factorization
\[
\begin{aligned}
&H^3+A^3+B^3+C^3+D^3+E^3-T^3 \\
&\qquad=\frac{3(A+H)}4
\bigl(274A^2-169AB+26B^2+544AH-169BH+274H^2\bigr).
\end{aligned}
\]
The second factor is zero by \eqref{eq:conic}.
\end{proof}

\begin{proposition}
For all integers \(r,s\), put
\[
H=r^2+13rs+26s^2,
\qquad
A=r^2-13rs+26s^2,
\qquad
B=6r^2+182s^2.
\]
Then \(H,A,B\) satisfy \eqref{eq:conic}.  The corresponding values of \(C,D,E,T\) are
\[
C=7r^2+156s^2,
\qquad
D=167r^2+4342s^2,
\]
\[
E=326r^2+8476s^2,
\qquad
T=340r^2+8840s^2.
\]
Consequently
\begin{equation}\label{eq:sixcube}
\begin{aligned}
&(r^2+13rs+26s^2)^3+(r^2-13rs+26s^2)^3 \\
&\quad +(6r^2+182s^2)^3+(7r^2+156s^2)^3 \\
&\quad +(167r^2+4342s^2)^3+(326r^2+8476s^2)^3 \\
&=(340r^2+8840s^2)^3.
\end{aligned}
\end{equation}
\end{proposition}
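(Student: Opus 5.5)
The plan is to verify the conic relation \eqref{eq:conic} by a change of variables that makes the $(r,s)$-dependence transparent. We then read off $C,D,E,T$ from the linear formulas of Theorem \ref{thm:conicidentity} and invoke that theorem to obtain \eqref{eq:sixcube}. No new idea is needed beyond organizing the computation so that it collapses.

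\textbf{Step 1: pass to $S$ and $\Delta$.} Put $S=A+H$ and $\Delta=H-A$. Since
\[
A^2+H^2=\tfrac12(S^2+\Delta^2),\qquad AH=\tfrac14(S^2-\Delta^2),
\]
we get $274(A^2+H^2)+544AH=273S^2+\Delta^2$, and the two terms $-169AB-169BH$ combine to $-169BS$. Hence the left side of \eqref{eq:conic} equals
\[
273S^2+\Delta^2-169BS+26B^2 .
\]

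\textbf{Step 2: substitute and check coefficients.} For the given forms, $S=2r^2+52s^2$, $\Delta=26rs$ (so $\Delta^2=676r^2s^2$), and $B=6r^2+182s^2$. Write $x=r^2$ and $y=s^2$; then everything is a binary quadratic form in $x,y$. The three coefficients are:
\begin{itemize}
\item for $x^2$: $1092-2028+936=0$;
\item for $xy$: $56784-114244+56784+676=0$;
\item for $y^2$: $738192-1599416+861224=0$.
\end{itemize}
So the expression from Step 1 vanishes identically, and $(H,A,B)$ lies on the conic for all $r,s$. The only real risk in the whole argument is an arithmetic slip in these three coefficient checks.

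\textbf{Step 3: compute $C,D,E,T$.} Since $(A+H)/2=r^2+26s^2$, the formulas of Theorem \ref{thm:conicidentity} give
\[
\begin{aligned}
C&=13(r^2+26s^2)-(6r^2+182s^2)=7r^2+156s^2,\\
D&=167(r^2+26s^2)=167r^2+4342s^2,\\
E&=326r^2+8476s^2,\\
T&=340r^2+8840s^2.
\end{aligned}
\]
These agree with the displayed forms in the statement.

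\textbf{Step 4: conclude.} Theorem \ref{thm:conicidentity} now yields $H^3+A^3+B^3+C^3+D^3+E^3=T^3$, which is exactly \eqref{eq:sixcube}. In fact it holds as a polynomial identity in $r,s$, since every step above is a polynomial identity.
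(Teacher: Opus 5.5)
Your proposal is correct and follows the paper's proof. You verify the conic by substitution, compute $C,D,E,T$ from $(A+H)/2=r^2+26s^2$, and invoke Theorem \ref{thm:conicidentity}. Your passage to $S=A+H$, $\Delta=H-A$ is only a tidier way to organize the coefficient check: it makes the odd $rs$-terms vanish automatically. I rechecked your three coefficient computations and they are right.
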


\begin{proof}
Substituting the displayed values of \(H,A,B\) in the left-hand side of \eqref{eq:conic} gives zero after collecting the coefficients of \(r^4,r^3s,r^2s^2,rs^3,s^4\).  Moreover,
\[
\frac{13(A+H)}2-B
 =\frac{13(2r^2+52s^2)}2-6r^2-182s^2
 =7r^2+156s^2,
\]
\[
\frac{167(A+H)}2=167r^2+4342s^2,
\]
\[
163(A+H)=326r^2+8476s^2,
\qquad
170(A+H)=340r^2+8840s^2.
\]
The six-cube identity follows from the preceding theorem.
\end{proof}

The hidden term in \eqref{eq:sixcube} is the first cube.  We force it to be \((-1)^n\) by a negative Pell recurrence.

\begin{lemma}\label{lem:pell}
Let \(r_n,s_n\) be defined by
\[
 r_0=1,
 \qquad
 r_1=-5,
 \qquad
 r_{n+2}=16r_{n+1}+r_n,
\]
and
\[
 s_0=0,
 \qquad
 s_1=2,
 \qquad
 s_{n+2}=16s_{n+1}+s_n.
\]
Then
\begin{equation}\label{eq:hiddenpell}
 r_n^2+13r_ns_n+26s_n^2=(-1)^n
 \qquad(n\ge0).
\end{equation}
\end{lemma}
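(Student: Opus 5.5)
The plan is to write the recurrence as multiplication by a norm \(-1\) unit in \(\mathbb Z[\eta]\) with \(\eta=8+\sqrt{65}\), and then prove \eqref{eq:hiddenpell} by a one-step induction.

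First observe that \(4H(r,s)=(2r+13s)^2-65s^2\). Put \(x_n=2r_n+13s_n\) and \(y_n=s_n\). Then \(x_n,y_n\) satisfy the same linear recurrence \(z_{n+2}=16z_{n+1}+z_n\), with initial data \((x_0,y_0)=(2,0)\) and \((x_1,y_1)=(-10+26,2)=(16,2)\). Setting \(\xi_n=(x_n+y_n\sqrt{65})/2\), we get \(\xi_0=1\) and \(\xi_1=8+\sqrt{65}=\eta\). The characteristic polynomial of the recurrence is \(X^2-16X-1\), whose roots are \(\eta\) and \(\bar\eta=8-\sqrt{65}\). Hence the sequence \(\eta^n\) satisfies the recurrence, and since it agrees with \(\xi_n\) at \(n=0,1\), we obtain \(\xi_n=\eta^n\) for all \(n\ge0\). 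The conjugate sequence \(\bar\xi_n=(x_n-y_n\sqrt{65})/2\) equals \(\bar\eta^n\) by the same argument.

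It then follows that
\[
H(r_n,s_n)=\frac{x_n^2-65y_n^2}{4}=\xi_n\bar\xi_n=(\eta\bar\eta)^n=(-1)^n,
\]
using \(\eta\bar\eta=64-65=-1\). This is exactly \eqref{eq:hiddenpell}.

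If one prefers to avoid irrationalities, there is an equivalent elementary route: show that \((r_{n+1},s_{n+1})=M(r_n,s_n)\) for a fixed integer matrix \(M\) satisfying \(H\circ M=-H\). Multiplying \((r+s\theta)\) by \(\eta\), where \(\theta\) is a root of \(X^2+13X+26\) (so that \(H\) is the norm form \(N(r-s\theta)\) up to sign convention), gives \(M\) explicitly, and \(H(M(r,s))=N(\eta)H(r,s)=-H(r,s)\). Together with \(H(1,0)=1\), an induction on \(n\) finishes the proof. This also requires checking that \((1,0)\mapsto(-5,2)\) under \(M\) and that \(M\) has trace \(16\) and determinant \(-1\), so that the two-term recurrence follows from Cayley--Hamilton.

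There is no real obstacle here. The only delicate step is choosing the coordinate \(2r+13s\) so that the initial vector \((r_1,s_1)=(-5,2)\) corresponds precisely to \(\eta\) rather than to some other unit of norm \(-1\). The check above, \(2(-5)+13\cdot 2=16\), confirms this.
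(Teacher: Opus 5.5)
Your proof is correct and is essentially the paper's argument: the paper also establishes \(2r_n+13s_n+s_n\sqrt{65}=2(8+\sqrt{65})^n\) by matching the recurrence and the two initial values, then takes norms using \(4H=(2r+13s)^2-65s^2\) and \(N(8+\sqrt{65})=-1\). Your alternative matrix route also checks out, with \((r,s)\mapsto(-5r-52s,\,2r+21s)\) (trace \(16\), determinant \(-1\), and \(H\) goes to \(-H\)), but it is not needed.
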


\begin{proof}
We claim that
\begin{equation}\label{eq:closedpell}
2r_n+13s_n+s_n\sqrt{65}=2(8+\sqrt{65})^n.
\end{equation}
This is true for \(n=0\) and \(n=1\).  Since \(8+\sqrt{65}\) and \(8-\sqrt{65}\) have sum \(16\) and product \(-1\), the powers of \(8+\sqrt{65}\) satisfy the recurrence \(z_{n+2}=16z_{n+1}+z_n\).  Thus the rational and irrational parts of the right-hand side of \eqref{eq:closedpell} satisfy the same recurrence as \(2r_n+13s_n\) and \(s_n\), proving \eqref{eq:closedpell}.
Taking norms in \(\mathbb Q(\sqrt{65})\) gives
\[
(2r_n+13s_n)^2-65s_n^2=4(8^2-65)^n=4(-1)^n.
\]
The left-hand side is
\[
4r_n^2+52r_ns_n+104s_n^2=4(r_n^2+13r_ns_n+26s_n^2),
\]
and \eqref{eq:hiddenpell} follows.
\end{proof}

For \(n\ge0\), define
\begin{equation}\label{eq:sequences}
\begin{aligned}
a_n&=r_n^2-13r_ns_n+26s_n^2,&
 b_n&=6r_n^2+182s_n^2,\\
c_n&=7r_n^2+156s_n^2,&
 d_n&=167r_n^2+4342s_n^2,\\
e_n&=326r_n^2+8476s_n^2,&
 t_n&=340r_n^2+8840s_n^2.
\end{aligned}
\end{equation}

\begin{theorem}\label{thm:near-miss}
For every \(n\ge0\),
\begin{equation}\label{eq:mainidentity}
 a_n^3+b_n^3+c_n^3+d_n^3+e_n^3=t_n^3+(-1)^{n+1}.
\end{equation}
Moreover all six integers \(a_n,b_n,c_n,d_n,e_n,t_n\) are positive.
\end{theorem}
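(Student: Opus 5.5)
The identity is a direct specialization of the six-cube identity \eqref{eq:sixcube}. Take \((r,s)=(r_n,s_n)\) from Lemma \ref{lem:pell}. Then the first cube in \eqref{eq:sixcube} is \(H(r_n,s_n)^3\), and by \eqref{eq:hiddenpell} this equals \(((-1)^n)^3=(-1)^n\). The remaining five bases on the left of \eqref{eq:sixcube} are exactly \(a_n,b_n,c_n,d_n,e_n\) as defined in \eqref{eq:sequences}, and the right-hand base is \(t_n\). Therefore
\[
(-1)^n+a_n^3+b_n^3+c_n^3+d_n^3+e_n^3=t_n^3,
\]
and moving \((-1)^n\) to the right gives \eqref{eq:mainidentity}. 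No further computation is needed beyond what the preceding proposition already records.

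For positivity, the terms \(b_n,c_n,d_n,e_n,t_n\) are positive definite diagonal forms \(\lambda r^2+\mu s^2\) with \(\lambda,\mu>0\). They are therefore positive as soon as \((r_n,s_n)\neq(0,0)\). This holds because \(H(r_n,s_n)=\pm1\).

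The only real point is \(a_n=r_n^2-13r_ns_n+26s_n^2\). This form is indefinite of discriminant \(65\), so positivity needs the sign pattern of the Pell orbit. I would use the identity
\[
a_n=H(r_n,s_n)-26r_ns_n=(-1)^n-26r_ns_n .
\]
It then suffices to show that \(r_ns_n\le 0\) for all \(n\), with \(r_ns_n<0\) for \(n\ge1\). For \(n=0\) we have \(a_0=1\).

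To get the sign pattern, I would prove by induction that \(r_n<0<s_n\) for \(n\ge1\), together with monotonicity \(|r_{n+1}|>|r_n|\) and \(s_{n+1}>s_n\). The base cases are \(r_1=-5,\ r_2=-79\) and \(s_1=2,\ s_2=32\). The recurrence \(z_{n+2}=16z_{n+1}+z_n\) preserves sign once two consecutive terms share a sign. For \(n\ge1\) the consecutive terms \(r_n,r_{n+1}\) are both negative and \(s_n,s_{n+1}\) are both positive, so the induction goes through.

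With this sign pattern, \(-26r_ns_n\ge 26\cdot5\cdot2=260>1\) for \(n\ge1\), and hence \(a_n>0\). Alternatively, the closed form \eqref{eq:closedpell} gives \(s_n=(\eta^n-\bar\eta^n)/\sqrt{65}\) and \(2r_n=\eta^n+\bar\eta^n-13s_n\), from which \(r_n\approx\tfrac{1-13/\sqrt{65}}{2}\eta^n<0\); the induction is simpler and is what I would write.

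The main obstacle is this sign analysis of \(a_n\), since it is the only step not handled by the definite forms. Even so, it is elementary once the relation \(a_n=(-1)^n-26r_ns_n\) is in view. Consistency checks are \(a_0=1\) and \(a_1=25+130+104=259\), which match the displayed examples.
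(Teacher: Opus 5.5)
Your proposal is correct and follows the paper's proof essentially step for step. You specialize \eqref{eq:sixcube} at \((r_n,s_n)\), use Lemma \ref{lem:pell} to turn the hidden cube into \((-1)^n\), and get positivity of \(a_n\) from the sign pattern \(r_n<0<s_n\) for \(n\ge1\), which the recurrence propagates from \(r_1,r_2,s_1,s_2\). The only difference is cosmetic: you rewrite \(a_n=(-1)^n-26r_ns_n\), whereas the paper simply notes that \(r_n^2\), \(-13r_ns_n\) and \(26s_n^2\) are nonnegative, with the middle term strictly positive.
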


\begin{proof}
Substitute \(r=r_n\) and \(s=s_n\) into \eqref{eq:sixcube}.  By Lemma \ref{lem:pell}, the first cube is \(((-1)^n)^3=(-1)^n\).  Moving this term to the right gives \eqref{eq:mainidentity}.

For positivity, the case \(n=0\) gives
\[
(a_0,b_0,c_0,d_0,e_0,t_0)=(1,6,7,167,326,340).
\]
For \(n\ge1\), we have \(s_n>0\) and \(r_n<0\).  Indeed \(s_1=2\), \(s_2=32\), and the recurrence preserves positivity; likewise \(r_1=-5\), \(r_2=-79\), and if two consecutive terms are negative then the next term is negative.  Hence
\[
 a_n=r_n^2-13r_ns_n+26s_n^2>0,
\]
and the remaining five quantities are plainly positive.
\end{proof}

The first two values
\[
(r_0,s_0)=(1,0),
\qquad
(r_1,s_1)=(-5,2)
\]
give
\[
1^3+6^3+7^3+167^3+326^3=340^3-1
\]
and
\[
259^3+878^3+799^3+21543^3+42054^3=43860^3+1.
\]

\begin{theorem}\label{thm:primitive}
For every \(n\ge0\),
\[
\gcd(a_n,b_n,c_n,d_n,e_n,t_n)=1.
\]
Consequently \eqref{eq:mainidentity} gives infinitely many primitive positive integer solutions for each of the equations
\[
 x_1^3+x_2^3+x_3^3+x_4^3+x_5^3=t^3+1
\]
and
\[
 x_1^3+x_2^3+x_3^3+x_4^3+x_5^3=t^3-1.
\]
\end{theorem}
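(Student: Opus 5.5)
The plan is to show that any prime dividing all six numbers must divide both $r_n^2$ and $s_n^2$. Since the Pell relation \eqref{eq:hiddenpell} forces $\gcd(r_n,s_n)=1$, this gives the contradiction. The key point is that the six integers are determined by binary quadratic forms in $(r_n,s_n)$, and a few of them already pin down the coordinates.

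\emph{Step 1: coprimality of $r_n,s_n$.} By Lemma \ref{lem:pell}, $r_n^2+13r_ns_n+26s_n^2=(-1)^n$. Any common divisor of $r_n$ and $s_n$ divides the left-hand side, hence divides $1$. So $\gcd(r_n,s_n)=1$.

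\emph{Step 2: small integer combinations of the six values.} Suppose a prime $\ell$ divides all of $a_n,\dots,t_n$. I will use the explicit forms
\[
b_n=6r_n^2+182s_n^2,\qquad c_n=7r_n^2+156s_n^2.
\]
The combination $c_n-b_n=r_n^2-26s_n^2$ is therefore divisible by $\ell$. Then
\[
b_n-6(c_n-b_n)=182s_n^2+156s_n^2=338s_n^2=2\cdot 13^2 s_n^2 ,
\]
and
\[
26(c_n-b_n)+b_n=32r_n^2=2^5 r_n^2 .
\]
Both $338s_n^2$ and $32r_n^2$ are thus divisible by $\ell$. If $\ell\nmid 26$, then $\ell\mid r_n$ and $\ell\mid s_n$, contradicting Step 1. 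This leaves only $\ell\in\{2,13\}$.

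\emph{Step 3: the primes $2$ and $13$.} Here I exploit the hidden-term relation together with $a_n$. We have $H_n=r_n^2+13r_ns_n+26s_n^2=\pm1$, and $a_n=H_n-26r_ns_n$. Hence $a_n\equiv\pm1\pmod{26}$, and in particular $a_n$ is coprime to $2$ and to $13$. So neither $2$ nor $13$ divides all six integers, and the gcd is $1$.

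Finally, the consequence for the two equations comes from Theorem \ref{thm:near-miss}. The even $n$ give $t^3-1$ and the odd $n$ give $t^3+1$. These solutions are infinitely many and distinct because $t_n=340r_n^2+8840s_n^2$ is strictly increasing for $n\ge1$: the sequences $|r_n|$ and $s_n$ grow by the recurrence with coefficient $16$. I expect no serious obstacle. The only care needed is to choose integer combinations whose coefficients involve only the primes $2$ and $13$, and then to dispose of those primes using $a_n\equiv H_n\pmod{26}$.
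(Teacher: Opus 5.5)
Your argument is correct in substance and takes a different route from the paper. However, one displayed identity in Step 2 is false: \(26(c_n-b_n)+b_n=26c_n-25b_n=32r_n^2-494s_n^2\), not \(32r_n^2\). The combination that actually eliminates \(s_n^2\) is \(7(c_n-b_n)+b_n=7c_n-6b_n=13r_n^2\). Step 2 survives with that correction. Alternatively, once \(\ell\mid s_n\) follows from \(\ell\mid 338s_n^2\), your expression is \(\equiv 32r_n^2\pmod{\ell}\), which also suffices. Either way, a prime \(\ell\nmid 26\) dividing \(b_n\) and \(c_n\) divides both \(r_n\) and \(s_n\), contradicting Step 1. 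Step 3, via \(a_n=h_n-26r_ns_n\equiv\pm1\pmod{26}\), is correct, and so is your treatment of the signs and of distinctness.

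The paper instead uses the linear relations inherited from the conic identity. With \(h_n=(-1)^n\) one has \(2d_n=167(a_n+h_n)\), \(e_n=163(a_n+h_n)\) and \(t_n=170(a_n+h_n)\). So a common divisor \(g\) of the six numbers divides \(167h_n\), \(163h_n\) and \(170h_n\), hence divides \(\gcd(167,163,170)=1\). That argument is shorter and needs neither \(\gcd(r_n,s_n)=1\) nor a separate treatment of the primes \(2\) and \(13\). It also shows that primitivity comes directly from the hidden term being a unit inside the conic relations.

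Your argument recovers \(r_n^2\) and \(s_n^2\) (up to the factors \(13\) and \(2\cdot 13^2\)) from \(b_n,c_n\). It then uses the norm equation twice: once for coprimality of \(r_n,s_n\), and once for \(a_n\equiv h_n\pmod{26}\). It is a little longer, but it involves only \(a_n,b_n,c_n\). So it proves the stronger fact \(\gcd(a_n,b_n,c_n)=1\) and never uses the constants \(163,167,170\).
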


\begin{proof}
Put
\[
 h_n=r_n^2+13r_ns_n+26s_n^2=(-1)^n.
\]
Then
\[
 a_n+h_n=2r_n^2+52s_n^2.
\]
The definitions give the linear relations
\[
2b_n+2c_n=13(a_n+h_n),
\qquad
2d_n=167(a_n+h_n),
\]
\[
 e_n=163(a_n+h_n),
 \qquad
 t_n=170(a_n+h_n).
\]
Let \(g\) be a common divisor of \(a_n,b_n,c_n,d_n,e_n,t_n\).  Since \(g\mid a_n\) and \(g\mid d_n\), the relation \(2d_n=167(a_n+h_n)\) implies \(g\mid 167h_n\), hence \(g\mid167\).  Similarly the relations involving \(e_n\) and \(t_n\) give \(g\mid163\) and \(g\mid170\).  Therefore
\[
 g\mid \gcd(167,163,170)=1,
\]
so \(g=1\).

The signs in \eqref{eq:mainidentity} alternate with \(n\).  The sequence \(s_n\) is strictly increasing for \(n\ge1\), so \(t_n=340r_n^2+8840s_n^2\) is unbounded.  Thus both signs occur infinitely often and give infinitely many distinct primitive solutions.
\end{proof}

\section{Generating functions and recurrence reciprocity}

The sequences in \eqref{eq:sequences} are quadratic forms evaluated along a Pell orbit.  This explains the common cubic denominator.  We first record the general mechanism in a form that will be used only in the special case \(D=65\).

\begin{theorem}\label{thm:generalpell}
Let \(D>1\) be square-free and let \(\eta\) be a unit of norm \(-1\) in \(\mathbb Q(\sqrt D)\).  Put \(\alpha=\eta^2\).  Suppose that two sequences \(r_n,s_n\) are rational linear combinations of \(\eta^n\) and \(\bar\eta^n\).  Then, for every homogeneous quadratic form \(Q(r,s)\) with rational coefficients, the sequence
\[
 u_n=Q(r_n,s_n)
\]
is a rational linear combination of
\[
 \alpha^n,\qquad \alpha^{-n},\qquad (-1)^n.
\]
Consequently \(u_n\) satisfies the third-order recurrence whose characteristic polynomial divides
\[
 (X-\alpha)(X-\alpha^{-1})(X+1),
\]
and its ordinary generating function has denominator dividing
\[
 (1+q)(1-\alpha q)(1-\alpha^{-1}q)
 =1-(\alpha+\alpha^{-1}-1)q-(\alpha+\alpha^{-1}-1)q^2+q^3.
\]
\end{theorem}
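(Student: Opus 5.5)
Write \(r_n=\lambda\eta^n+\mu\bar\eta^n\) and \(s_n=\lambda'\eta^n+\mu'\bar\eta^n\). The hypothesis only says the coefficients are rational; if they were allowed to lie in \(\mathbb Q(\sqrt D)\), the argument below would be unchanged except that the final combination would have coefficients in \(\mathbb Q(\sqrt D)\). The plan is to expand the quadratic form directly in these exponential terms.

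Since \(Q(r,s)=\kappa r^2+\beta rs+\gamma s^2\) is bilinear in its arguments, the expansion of \(Q(r_n,s_n)\) is a linear combination, with coefficients polynomial in \(\lambda,\mu,\lambda',\mu'\) and the coefficients of \(Q\), of three kinds of products:
\[
\eta^{2n}=\alpha^n,\qquad \bar\eta^{2n},\qquad (\eta\bar\eta)^n.
\]
The key step is to use the norm condition \(\eta\bar\eta=-1\). This gives \(\bar\eta=-\eta^{-1}\), hence \(\bar\eta^{2n}=\eta^{-2n}=\alpha^{-n}\) and \((\eta\bar\eta)^n=(-1)^n\). Therefore
\[
u_n=x\alpha^n+y\alpha^{-n}+z(-1)^n
\]
for constants \(x,y,z\), which is the first claim.

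For the recurrence, I would use the standard fact that a linear combination of \(\theta_i^n\) with distinct \(\theta_i\) is annihilated by the shift-operator polynomial \(\prod(X-\theta_i)\). The minimal annihilating polynomial is the product of those factors whose coefficient is nonzero, so the characteristic polynomial divides \((X-\alpha)(X-\alpha^{-1})(X+1)\). The three roots are distinct: \(\alpha>1\) because \(|\eta|\ne1\), and \(\alpha\neq-1\). By the usual correspondence between recurrences and generating functions, \(\sum u_nq^n=N(q)/\prod(1-\theta_iq)\) with \(\deg N<3\). So the denominator divides \((1+q)(1-\alpha q)(1-\alpha^{-1}q)\). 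Equivalently, summing the three geometric series directly gives \(\sum u_nq^n=x/(1-\alpha q)+y/(1-\alpha^{-1}q)+z/(1+q)\), and putting these over a common denominator yields the claim.

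Finally, expand the product. Set \(\sigma=\alpha+\alpha^{-1}\). Then \((1-\alpha q)(1-\alpha^{-1}q)=1-\sigma q+q^2\), and multiplying by \(1+q\) gives
\[
1+(1-\sigma)q+(1-\sigma)q^2+q^3,
\]
which is the displayed form. Note that \(\sigma=\eta^2+\bar\eta^2=(\eta+\bar\eta)^2+2\) is rational, so this polynomial has rational coefficients.

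The main obstacle is mild and concerns rationality rather than the combinatorics. If "rational linear combination" is read literally, the coefficients \(x,y,z\) are rational combinations of the rational input coefficients, and nothing more is needed. If instead one wants the denominator to have rational coefficients while the coefficients of \(r_n,s_n\) lie in \(\mathbb Q(\sqrt D)\), the point is that divisibility is witnessed by the rational polynomial above. The integral case needed for Lemma \ref{lem:pell} is covered because \(r_n,s_n\) there are rational sequences, so Galois invariance forces \(y=\bar x\).
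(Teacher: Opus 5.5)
Your argument is correct and matches the paper's proof: expand $Q$ in $\eta^n,\bar\eta^n$, use $\eta\bar\eta=-1$ to reduce to $\alpha^n,\alpha^{-n},(-1)^n$, and read off the recurrence and denominator. The paper writes the coefficients in $\mathbb Q(\sqrt D)$ from the start, which your Galois-invariance remark covers. One small slip: $\alpha>1$ need not hold (take $\eta=8-\sqrt{65}$), but $\alpha=\eta^2>0$ and $\alpha\ne1$ already give the distinctness of $\alpha,\alpha^{-1},-1$ that you use.
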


\begin{proof}
Write
\[
 r_n=a\eta^n+b\bar\eta^n,
 \qquad
 s_n=c\eta^n+d\bar\eta^n
\]
with coefficients in \(\mathbb Q(\sqrt D)\).  Since \(N(\eta)=-1\), we have \(\eta\bar\eta=-1\).  A quadratic expression in \(r_n,s_n\) is therefore a linear combination of
\[
 \eta^{2n}=\alpha^n,
 \qquad
 \bar\eta^{2n}=\alpha^{-n},
 \qquad
 (\eta\bar\eta)^n=(-1)^n.
\]
The possible recurrence and denominator are therefore those attached to the three characteristic roots \(\alpha,\alpha^{-1},-1\).  If one or more components vanish for a particular quadratic form, the minimal recurrence and denominator may be a proper divisor.
\end{proof}

\begin{lemma}\label{lem:commonrecurrence}
Each of the six sequences \(a_n,b_n,c_n,d_n,e_n,t_n\) satisfies
\begin{equation}\label{eq:commonrecurrence}
 u_{n+3}=257u_{n+2}+257u_{n+1}-u_n.
\end{equation}
\end{lemma}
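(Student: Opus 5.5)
The plan is to apply Theorem \ref{thm:generalpell} with \(D=65\), \(\eta=8+\sqrt{65}\) and \(\alpha=\eta^2=129+16\sqrt{65}\). First I will show that \(r_n\) and \(s_n\) are linear combinations of \(\eta^n\) and \(\bar\eta^n\). Both sequences satisfy \(z_{n+2}=16z_{n+1}+z_n\), whose characteristic polynomial is \(X^2-16X-1=(X-\eta)(X-\bar\eta)\). The roots are distinct, so the general solution is \(x\eta^n+y\bar\eta^n\). Explicitly, \eqref{eq:closedpell} from the proof of Lemma \ref{lem:pell} gives
\[
s_n=\frac{\eta^n-\bar\eta^n}{\sqrt{65}},\qquad 2r_n+13s_n=\eta^n+\bar\eta^n,
\]
where the second formula comes from adding \eqref{eq:closedpell} to its conjugate. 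Hence \(r_n\) is also such a combination. The coefficients lie in \(\mathbb Q(\sqrt{65})\), and that is all the proof of Theorem \ref{thm:generalpell} actually uses.

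Each of \(a_n,\dots,t_n\) is \(Q(r_n,s_n)\) for a rational binary quadratic form \(Q\), by \eqref{eq:sequences}. So by Theorem \ref{thm:generalpell}, each sequence is a linear combination of \(\alpha^n\), \(\alpha^{-n}\) and \((-1)^n\). Every such combination satisfies the linear recurrence with characteristic polynomial
\[
(X-\alpha)(X-\alpha^{-1})(X+1).
\]
It remains to expand this polynomial. We have \(\alpha+\alpha^{-1}=\alpha+\bar\alpha=258\), because \(\alpha\bar\alpha=(\eta\bar\eta)^2=1\). Therefore
\[
(X^2-258X+1)(X+1)=X^3-257X^2-257X+1,
\]
which is exactly the recurrence \eqref{eq:commonrecurrence}.

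As an independent check, I would verify the recurrence directly on the first terms. Computing \((r_n,s_n)\) for \(n=0,\dots,3\) gives \((1,0),(-5,2),(-79,32),(-1269,514)\). Evaluating \(a_n\) at these values should satisfy \(a_3=257a_2+257a_1-a_0\), and likewise for the other five sequences.

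No real obstacle is expected. The only point needing care is justifying the expression of \(r_n,s_n\) in the basis \(\eta^n,\bar\eta^n\), and that is immediate from \eqref{eq:closedpell}.
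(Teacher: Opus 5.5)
Your proposal is correct and follows essentially the same route as the paper: the closed formula \eqref{eq:closedpell} writes \(r_n,s_n\) in terms of \((8\pm\sqrt{65})^n\), so each quadratic form in them is a combination of \(\alpha^n,\alpha^{-n},(-1)^n\), and these are exactly the roots of \(X^3-257X^2-257X+1\). Your remark that the coefficients lie in \(\mathbb Q(\sqrt{65})\) rather than \(\mathbb Q\) is a small correction to the paper's wording (it says "rational linear combinations"), and \(\mathbb Q(\sqrt{65})\)-coefficients are all the argument needs.
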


\begin{proof}
Let
\[
 \alpha=(8+\sqrt{65})^2=129+16\sqrt{65}.
\]
Then
\[
 \alpha^{-1}=129-16\sqrt{65},
 \qquad
 \alpha+\alpha^{-1}=258.
\]
The closed formula \eqref{eq:closedpell} implies that \(r_n\) and \(s_n\) are rational linear combinations of \((8+\sqrt{65})^n\) and \((8-\sqrt{65})^n\).  Therefore every quadratic expression in \(r_n,s_n\) is a rational linear combination of
\[
 \alpha^n,
 \qquad
 \alpha^{-n},
 \qquad
 (-1)^n.
\]
The three numbers \(\alpha,\alpha^{-1},-1\) are the roots of
\[
 X^3-257X^2-257X+1.
\]
Thus every such sequence satisfies \eqref{eq:commonrecurrence}.
\end{proof}

\begin{theorem}\label{thm:generatingfunctions}
Let
\[
 R(q)=1-257q-257q^2+q^3.
\]
For each of the six explicit sequences below, the ordinary generating function has denominator exactly \(R(q)\).  More precisely,
\begin{align*}
\sum_{n\ge0}a_nq^n&=\frac{1+2q-1091q^2}{R(q)},\\
\sum_{n\ge0}b_nq^n&=\frac{6-664q-3374q^2}{R(q)},\\
\sum_{n\ge0}c_nq^n&=\frac{7-1000q-3711q^2}{R(q)},\\
\sum_{n\ge0}d_nq^n&=\frac{167-21376q-91015q^2}{R(q)},\\
\sum_{n\ge0}e_nq^n&=\frac{326-41728q-177670q^2}{R(q)},\\
\sum_{n\ge0}t_nq^n&=\frac{340-43520q-185300q^2}{R(q)}.
\end{align*}
\end{theorem}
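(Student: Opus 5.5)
We combine Lemma \ref{lem:commonrecurrence} with the first three terms of each sequence, and then check separately that nothing cancels. By Lemma \ref{lem:commonrecurrence}, each sequence \(u_n\in\{a_n,\dots,t_n\}\) satisfies \(u_{n+3}-257u_{n+2}-257u_{n+1}+u_n=0\). Multiplying \(U(q)=\sum u_nq^n\) by \(R(q)\) therefore kills every coefficient of \(q^k\) with \(k\ge3\). Hence \(R(q)U(q)\) is a polynomial of degree at most two:
\[
N(q)=u_0+(u_1-257u_0)q+(u_2-257u_1-257u_0)q^2.
\]
So the only input needed for the displayed numerators is the values \(u_0,u_1,u_2\).

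To get these, compute \((r_0,s_0)=(1,0)\), \((r_1,s_1)=(-5,2)\) and \((r_2,s_2)=(-79,32)\) from Lemma \ref{lem:pell}, and substitute into \eqref{eq:sequences}. For example, \(a_0=1\) and \(a_1=25+130+104=259\), and \(a_2\) comes from \(r_2^2=6241\), \(r_2s_2=-2528\), \(s_2^2=1024\). Then \(u_1-257u_0\) gives \(259-257=2\), matching the numerator \(1+2q-1091q^2\). The other five sequences are handled the same way; the values at \(n=0,1\) agree with the displayed near misses, which serves as a check. This part is routine arithmetic.

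For the word ``exactly'', we have to show that \(N(q)\) shares no factor with \(R(q)=(1+q)(1-\alpha q)(1-\alpha^{-1}q)\). Since \(N\) has rational coefficients and \(1-258q+q^2\) is irreducible over \(\mathbb Q\), there are only two ways a cancellation could happen.

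First, \(N(-1)=0\). This is equivalent to the \((-1)^n\) component of \(u_n\) vanishing, so we just evaluate \(N(-1)\). For \(a\) it is \(1-2-1091\neq0\), and the other five are similarly nonzero.

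Second, \(1-258q+q^2\) divides \(N\). Because \(\deg N\le 2\), this would force \(N\) to be a rational multiple of \(1-258q+q^2\). Then \(U=c/(1+q)\), so \(u_n=c(-1)^n\) would be bounded. But \(u_n\) is unbounded: \(s_n\) grows and the \(s_n^2\) coefficient of each form is positive. Alternatively, one can check directly that no numerator is proportional to \(1-258q+q^2\).

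The main obstacle is this exactness step. The recurrence itself is immediate, but one has to justify that the \(\alpha^n\) and \((-1)^n\) components are both nonzero for each of the six forms, and the two evaluations above are the cleanest way to do it.
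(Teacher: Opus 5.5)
Your proposal is correct and follows the paper's proof. Lemma~\ref{lem:commonrecurrence} turns $R(q)U(q)$ into the degree-$\le 2$ numerator built from $u_0,u_1,u_2$, and exactness is checked against the two rational factors $1+q$ and $1-258q+q^2$ of $R$, as in Lemma~\ref{lem:numeratorgcds}. For the quadratic factor, your degree argument (divisibility would force proportionality) is a quicker substitute for the paper's table of remainders, and the direct check (e.g.\ $2\neq-258$ for $N_a$) is cleaner than unboundedness, which for the indefinite form defining $a_n$ needs $r_n<0<s_n$, not just a positive $s^2$ coefficient.
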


\begin{proof}
If \(u_n\) satisfies \eqref{eq:commonrecurrence}, then
\[
\sum_{n\ge0}u_nq^n
=
\frac{u_0+(u_1-257u_0)q+(u_2-257u_1-257u_0)q^2}{1-257q-257q^2+q^3}.
\]
The initial values are
\begin{align*}
(a_0,a_1,a_2)&=(1,259,65729),&
(b_0,b_1,b_2)&=(6,878,223814),\\
(c_0,c_1,c_2)&=(7,799,203431),&
(d_0,d_1,d_2)&=(167,21543,5488455),\\
(e_0,e_1,e_2)&=(326,42054,10713990),&
(t_0,t_1,t_2)&=(340,43860,11174100).
\end{align*}
Substitution gives the displayed rational functions.  The six numerators are coprime to \(R(q)\) over \(\mathbb Q[q]\), as recorded in Appendix \ref{app:verification}; hence the displayed denominator is exact in each case.
\end{proof}

\begin{corollary}\label{cor:quality}
Let \(\alpha=129+16\sqrt{65}\).  There exists a constant \(c>0\) such that
\[
 t_n\sim c\alpha^n.
\]
Consequently the relative error in the five-cube approximation to \(t_n^3\) is
\[
 \frac1{t_n^3}=O(\alpha^{-3n}).
\]
\end{corollary}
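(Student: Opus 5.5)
The plan is to use the explicit closed form \eqref{eq:closedpell} from Lemma \ref{lem:pell} together with the decomposition of Theorem \ref{thm:generalpell}. Put \(\eta=8+\sqrt{65}\) and \(\bar\eta=8-\sqrt{65}\), so that \(\eta\bar\eta=-1\). Solving \eqref{eq:closedpell} and its Galois conjugate gives
\[
s_n=\frac{\eta^n-\bar\eta^n}{\sqrt{65}},\qquad
r_n=\eta^n+\bar\eta^n-\frac{13}{2}s_n .
\]
Substituting into \(t_n=340r_n^2+8840s_n^2\), I will write
\[
t_n=c\,\alpha^n+c'\alpha^{-n}+c''(-1)^n
\]
with explicit constants. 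Since \(\eta^2=\alpha\) and \(\bar\eta^{2}=\alpha^{-1}\), the coefficient \(c\) is obtained by keeping only the \(\eta^{2n}\) terms:
\[
c=340\Bigl(1-\tfrac{13}{2\sqrt{65}}\Bigr)^2+\frac{8840}{65}.
\]
This is visibly a positive number, because \(8840/65=136>0\) and the first term is a square times \(340\). Hence \(c>0\). The conjugate coefficient \(c'\) and the coefficient \(c''\) are just some fixed real constants.

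Next I bound the remainder. Since \(\alpha=129+16\sqrt{65}>1\), the terms \(c'\alpha^{-n}\) and \(c''(-1)^n\) are \(O(1)\), so
\[
\frac{t_n}{c\,\alpha^n}=1+O(\alpha^{-n})\to1,
\]
that is, \(t_n\sim c\alpha^n\). As a cross-check, the dominant root of \(R\) in Theorem \ref{thm:generatingfunctions} is \(\alpha^{-1}\). The residue of \(T(q)\) at \(q=\alpha^{-1}\) therefore gives the same \(c\), and its nonvanishing is guaranteed by the coprimality of the numerator with \(R(q)\).

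For the relative error, Theorem \ref{thm:near-miss} gives
\[
\Bigl|\frac{a_n^3+b_n^3+c_n^3+d_n^3+e_n^3-t_n^3}{t_n^3}\Bigr|=\frac1{t_n^3}.
\]
Since \(t_n\ge \tfrac{c}{2}\alpha^n\) for all large \(n\), we get \(1/t_n^3=O(\alpha^{-3n})\). The remaining finitely many \(n\) are absorbed into the implied constant, which is legitimate because \(t_n\ge 340>0\).

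The only step needing care is checking \(c>0\), that is, ruling out cancellation in the \(\alpha^n\) component. I handle it by the explicit computation above. An alternative is to note that \(t_n\) is unbounded by Theorem \ref{thm:primitive} and positive, so the \(\alpha^n\) coefficient cannot be zero or negative.
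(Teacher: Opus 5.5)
Your argument is correct and is essentially the paper's: both use \eqref{eq:closedpell} to write \(r_n,s_n\) as real combinations of \(\eta^n,\bar\eta^n\), so the \(\alpha^n\)-coefficient of \(t_n=340r_n^2+8840s_n^2\) is a positive combination of squares of real numbers; you simply make the three-term decomposition explicit. One small slip is that adding \eqref{eq:closedpell} to its conjugate gives \(r_n=\tfrac12(\eta^n+\bar\eta^n)-\tfrac{13}{2}s_n\) (your formula gives \(r_0=2\)), so the correct constant is \(c=85\bigl(1-13/\sqrt{65}\bigr)^2+136=442-34\sqrt{65}\), but \(c>0\) and the rest of your argument are unaffected.
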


\begin{proof}
From \eqref{eq:closedpell}, both \(r_n\) and \(s_n\) grow like nonzero constant multiples of \((8+\sqrt{65})^n\).  Since
\[
 t_n=340r_n^2+8840s_n^2
\]
with positive coefficients, it follows that \(t_n\sim c(8+\sqrt{65})^{2n}=c\alpha^n\) for some \(c>0\).  The near-miss error is exactly \(1\), so the stated relative estimate follows.
\end{proof}

Combining Theorems \ref{thm:near-miss} and \ref{thm:generatingfunctions} gives the Ramanujan-type formulation stated in the introduction.

The denominator has an exact reciprocal symmetry
\begin{equation}\label{eq:reciprocaldenominator}
 R(q)=q^3R(q^{-1}),
\end{equation}
reflecting the interchange \(\alpha\leftrightarrow\alpha^{-1}\).  Equivalently, extending the Pell orbit to negative indices interchanges the two real-quadratic roots of the recurrence.  We shall not use the negative-index companion functions below; the central reciprocity needed later is the geometric involution of Section \ref{sec:reciprocal}.

\section{The associated elliptic K3 surface}

We next pass from the quadratic identity to an elliptic surface.  The elementary transformation from a sum of two cubes to a Mordell curve is classical, but we record it for completeness.

\begin{lemma}\label{lem:mordelltransform}
Let \(U,V,K\in\mathbb Q\) satisfy \(U+V\ne0\) and \(K=U^3+V^3\).  Then
\[
 x=\frac{12K}{U+V},
 \qquad
 y=\frac{36K(U-V)}{U+V}
\]
satisfy
\[
 y^2=x^3-432K^2.
\]
Conversely, if \(K\ne0\) and \((x,y)\) is a rational point on \(y^2=x^3-432K^2\) with \(x\ne0\), then
\[
 U=\frac{36K+y}{6x},
 \qquad
 V=\frac{36K-y}{6x}
\]
satisfy \(U^3+V^3=K\).
\end{lemma}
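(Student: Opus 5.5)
Both directions are direct verifications, and I would organize each around the factorization
\[
U^3+V^3=(U+V)(U^2-UV+V^2).
\]
Put \(S=U+V\) and \(D=U-V\). Then \(UV=(S^2-D^2)/4\) and \(U^2-UV+V^2=(S^2+3D^2)/4\), so \(K=S(S^2+3D^2)/4\).

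\textbf{Forward direction.} With \(x=12K/S\) and \(y=36KD/S\), the claim \(y^2=x^3-432K^2\) becomes, after multiplying by \(S^3/K^2\) (if \(K=0\) both sides vanish trivially),
\[
1296D^2S=1728K-432S^3 .
\]
Dividing by \(432\) gives \(3D^2S=4K-S^3\). This is exactly the identity \(4K=S^3+3SD^2\) noted above, so the forward direction is done.

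\textbf{Converse.} Here \(U+V=72K/(6x)=12K/x\) and \(U-V=y/(3x)\). The plan is to rerun the same identity in reverse. Write \(S=12K/x\), which is nonzero because \(K\ne0\), and \(D=y/(3x)\). We need \(S(S^2+3D^2)=4K\), that is,
\[
\frac{12K}{x}\left(\frac{144K^2}{x^2}+\frac{y^2}{3x^2}\right)=4K .
\]
Dividing by \(4K\) and clearing denominators reduces this to
\[
3\left(144K^2+\frac{y^2}{3}\right)=x^3, \qquad\text{i.e.}\qquad 432K^2+y^2=x^3,
\]
which is the curve equation. Hence \(U^3+V^3=S(S^2+3D^2)/4=K\).

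The only place where care is needed is the bookkeeping of the nonvanishing hypotheses: \(S\ne0\) is needed to define \(x\) and \(y\) in the forward direction, and \(x\ne0\) and \(K\ne0\) are needed in the converse. There is no real obstacle beyond this.
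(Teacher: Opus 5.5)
Your proof is correct and is essentially the paper's argument. The paper factors \(K=(U+V)(U^2-UV+V^2)\) and uses \(4(U^2-UV+V^2)-(U+V)^2=3(U-V)^2\); multiplying by \(U+V\) gives your identity \(4K=S^3+3SD^2\). Your converse, which runs this identity backwards with \(U+V=12K/x\) and \(U-V=y/(3x)\), carries out the substitution that the paper only asserts.
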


\begin{proof}
Put \(S=U^2-UV+V^2\).  Since \(K=(U+V)S\), the displayed substitution gives \(x=12S\) and \(y=36(U-V)S\).  Hence
\[
 x^3-432K^2=432S^2\bigl(4S-(U+V)^2\bigr)=1296(U-V)^2S^2=y^2.
\]
The inverse formulas are obtained by solving the equations \(x=12K/(U+V)\) and \(y=36K(U-V)/(U+V)\) for \(U\) and \(V\); substitution then gives \(U^3+V^3=K\).
\end{proof}

Now define
\[
 p(u)=1-13u+26u^2,
 \qquad
 q(u)=6+182u^2,
\]
\[
 F(u)=p(u)+q(u)=208u^2-13u+7,
\]
and
\[
 G(u)=p(u)^2-p(u)q(u)+q(u)^2.
\]
Then
\begin{equation}\label{eq:Gexpanded}
G(u)=29068u^4+1690u^3+2067u^2+52u+31,
\end{equation}
and
\begin{equation}\label{eq:Kfactor}
K(u):=p(u)^3+q(u)^3=F(u)G(u).
\end{equation}
Thus
\[
\begin{aligned}
K(u)={}&6046144u^6-26364u^5+611442u^4-4225u^3 \\
&+20241u^2-39u+217.
\end{aligned}
\]

For reference we record the four exact arithmetic certificates used in this section and in the cyclic-cover argument:
\[
\begin{array}{@{}c c@{}}
\toprule
\text{quantity} & \text{value}\\
\midrule
\Disc(F) & -5655\\
\Disc(G) & 4748229647057752128\\
\Resultant(F,G) & 308771371584\\
\Resultant(p,q) & 185224\\
\bottomrule
\end{array}
\]
The resultants show, in particular, that the factorization \(K=FG\) has no repeated factor and that the Fermat map constructed below has no base point.

\begin{lemma}\label{lem:irreducibleFG}
The polynomial \(F(u)=208u^2-13u+7\) is irreducible over \(\mathbb Q\), and \(G(u)\) is irreducible over \(\mathbb Q\).  Hence the six geometric roots of \(K=FG\) form one degree-two and one degree-four Galois orbit over \(\mathbb Q\).
\end{lemma}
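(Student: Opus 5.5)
For \(F(u)=208u^2-13u+7\) the argument is immediate. Its discriminant is \(\Disc(F)=169-4\cdot208\cdot7=-5655<0\), as recorded in the table, so \(F\) has no real roots. In particular it has no rational roots, and a quadratic without rational roots is irreducible over \(\mathbb Q\). Moreover \(\Resultant(F,G)\neq0\) and \(\Disc(F)\Disc(G)\neq0\), so \(K=FG\) is squarefree with six distinct geometric roots. Once irreducibility of \(G\) is known, the Galois orbit statement follows: the roots of \(F\) form one orbit of size two and the roots of \(G\) one orbit of size four.

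For the quartic \(G(u)=29068u^4+1690u^3+2067u^2+52u+31\), I would reduce modulo a suitable prime and use the standard principle. If \(\bar G\) is irreducible over \(\mathbb F_\ell\) and the leading coefficient is a unit mod \(\ell\), then \(G\) is irreducible over \(\mathbb Q\), since \(G\) is primitive (\(\gcd\) of its coefficients is \(1\)) and Gauss's lemma applies.

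First I would try \(\ell=3\). Mod \(3\) the coefficients are
\[
29068\equiv1,\quad 1690\equiv1,\quad 2067\equiv0,\quad 52\equiv1,\quad 31\equiv1,
\]
so \(\bar G=u^4+u^3+u+1=(u+1)(u^3+1)\), which is reducible. Next I would try \(\ell=5\):
\[
\bar G=3u^4+0u^3+2u^2+2u+1 .
\]
I would check for roots in \(\mathbb F_5\) and then test whether \(\bar G\) is a product of two monic-normalized irreducible quadratics. The same procedure applies to \(\ell=7,11,\dots\) until a prime with \(\bar G\) irreducible is found.

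If a single prime does not suffice, for example because the Galois group contains no \(4\)-cycle, I would combine two primes. One prime where \(\bar G\) factors as a linear term times an irreducible cubic rules out a \(2+2\) splitting over \(\mathbb Q\). Another prime where \(\bar G\) is a product of two irreducible quadratics, or is irreducible, rules out a \(1+3\) splitting. The nonzero discriminant \(\Disc(G)=4748229647057752128\) guarantees good reduction at every prime not dividing it, so these factorization types are compatible.

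A fallback, needed only if no suitable primes turn up, is to argue over \(\mathbb Z\) directly. I would exclude rational roots by the rational root theorem using the divisors of \(31\) and \(29068\), which is feasible given that all coefficients are positive. I would then exclude a quadratic factorization by solving the coefficient equations, aided by the fact that \(G=p^2-pq+q^2\) is a norm from \(\mathbb Q(\omega)\): over \(\mathbb Q(\sqrt{-3})\) it factors as \((p+\omega q)(p+\omega^2 q)\). Hence any rational quadratic factor would have to come from a different pairing of these roots. I would rule that out by showing \(p+\omega q\) is irreducible over \(\mathbb Q(\omega)\); its discriminant is not a square there.

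The main obstacle is finding the certifying prime or primes for \(G\). It is a finite, routine computation, but it is the only nontrivial input of the lemma.
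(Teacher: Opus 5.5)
Your strategy is the paper's: the paper also reduces $G$ modulo $5$, and it excludes linear and quadratic factors over $\mathbb F_5$ in one step via $\gcd(\overline G,u^{25}-u)=1$. The gap is that you never perform this check. The proposal stops at ``I would check for roots in $\mathbb F_5$ and then test\ldots'', so the one nontrivial input of the lemma, which you yourself identify as such, rests on a computation whose outcome you do not report.

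In fact $\ell=5$ works, so the escalation to further primes and the fallbacks are unnecessary. First, there are no roots. For $u\in\mathbb F_5^\times$ we have $u^4=1$, hence $\overline G(u)=2(u^2+u+2)$. Its discriminant $-7\equiv 3$ is a nonsquare mod $5$, and $\overline G(0)=1$. Second, there is no quadratic factor. We have $2\overline G\equiv u^4+4u^2+4u+2$, and a factorization $(u^2+au+b)(u^2-au+d)$ forces $b+d=4+a^2$, $bd=2$ and $a(d-b)=4$. So $a\neq 0$. If $a^2=1$ then $d=-b$ and $b^2=3$, which is impossible. If $a^2=4$ then $\{b,d\}=\{1,2\}$ and $a(d-b)=\pm a\in\{2,3\}$, never $4$. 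Hence $\overline G$ is irreducible over $\mathbb F_5$. Since $29068\equiv 3\not\equiv 0\pmod 5$, your Gauss-lemma step then gives irreducibility of $G$ over $\mathbb Q$.

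Your treatment of $F$ (negative discriminant) and the deduction of the two Galois orbits are fine. Your fallback would also have gone through. The discriminant of $p+\omega q=(26+182\omega)u^2-13u+(1+6\omega)$ is $4433+3016\omega$. Its norm $15377817$ lies strictly between $3921^2$ and $3922^2$, so $p+\omega q$ and its conjugate are irreducible over $\mathbb Q(\omega)$. Neither is a scalar multiple of a rational polynomial, so by unique factorization in $\mathbb Q(\omega)[u]$, $G$ has no rational factor of degree $1$ or $2$. That route is more work than the mod-$5$ check.
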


\begin{proof}
The discriminant of \(F\) is \(-5655\), which is not a square in \(\mathbb Q\).  Thus \(F\) is irreducible.  Reducing \(G\) modulo \(5\), we obtain
\[
 \overline G(u)=3u^4+2u^2+2u+1\in\mathbb F_5[u].
\]
A quartic over \(\mathbb F_5\) is reducible if and only if it has a root in \(\mathbb F_5\) or a quadratic factor over \(\mathbb F_5\); equivalently, it has a root in \(\mathbb F_{25}\).  Directly,
\[
 \gcd(\overline G(u),u^{25}-u)=1
\]
in \(\mathbb F_5[u]\).  Thus \(\overline G\) has neither a linear nor a quadratic factor over \(\mathbb F_5\).  Hence \(\overline G\) is irreducible over \(\mathbb F_5\), and therefore \(G\) is irreducible over \(\mathbb Q\).
\end{proof}

\begin{proposition}\label{prop:pell-to-k3}
For \(n\ge0\), put \(u_n=s_n/r_n\).  Then
\[
 r_n^2p(u_n)=a_n,
 \qquad
 r_n^2q(u_n)=b_n,
\]
and hence
\[
 r_n^6K(u_n)=a_n^3+b_n^3.
\]
Moreover, after multiplying by the natural powers of \(r_n\), the section induced by \(K=p^3+q^3\) specializes to the integral Mordell point
\[
 \bigl(12(a_n^2-a_nb_n+b_n^2),
 36(a_n-b_n)(a_n^2-a_nb_n+b_n^2)\bigr)
\]
on
\[
 y^2=x^3-432(a_n^3+b_n^3)^2.
\]
\end{proposition}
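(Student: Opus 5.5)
The plan is to verify the statement by direct homogenization. There is one subtlety at the start: \(r_n\) must be nonzero for \(u_n=s_n/r_n\) to make sense. For \(n=0\) we have \(r_0=1\). For \(n\ge1\), the positivity argument in the proof of Theorem \ref{thm:near-miss} gives \(r_n<0\). So \(r_n\ne0\) for all \(n\ge0\).

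For the first identities, substitute \(u=s_n/r_n\) into \(p\) and \(q\) and multiply by \(r_n^2\). This gives
\[
r_n^2p(s_n/r_n)=r_n^2-13r_ns_n+26s_n^2=a_n,
\qquad
r_n^2q(s_n/r_n)=6r_n^2+182s_n^2=b_n,
\]
which are exactly the definitions in \eqref{eq:sequences}. Cubing and adding, and using \(K=p^3+q^3\) from \eqref{eq:Kfactor}, yields \(r_n^6K(u_n)=a_n^3+b_n^3\).

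For the Mordell point, apply Lemma \ref{lem:mordelltransform} over \(\Kzero\) with \(U=p\), \(V=q\), and \(K=p^3+q^3\). Here \(U+V=F\) is a nonzero polynomial. With \(G=p^2-pq+q^2\) and \(K=FG\), the section is \(x=12K/F=12G\) and \(y=36K(p-q)/F=36(p-q)G\). This is the section \(P\) of Theorem \ref{thm:mainpackage}, and it satisfies \(y^2=x^3-432K^2\) identically in \(u\).

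Next, specialize at \(u=u_n\). The weights are homogeneous: \(G\) has weight \(4\) and \(p-q\) has weight \(2\) in \(r_n\). Rescale by \((x,y)\mapsto(r_n^4x,r_n^6y)\), which maps \(y^2=x^3-432K^2\) to \(y^2=x^3-432(r_n^6K)^2\). Then
\[
r_n^4G(u_n)=a_n^2-a_nb_n+b_n^2,
\qquad
r_n^6(p-q)(u_n)G(u_n)=(a_n-b_n)(a_n^2-a_nb_n+b_n^2),
\]
so the rescaled point has the stated integral coordinates. It lies on \(y^2=x^3-432(a_n^3+b_n^3)^2\) by the first part.

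One caveat concerns \(F(u_n)\), which could in principle vanish. That would not matter, since the polynomial form \((12G,36(p-q)G)\) of the section is regular everywhere. In fact \(F\) has no real roots because its discriminant is \(-5655<0\), so \(F(u_n)\neq0\) anyway.

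The identity \(x^3-432K^2=y^2\) can also be checked directly for \(U=a_n\), \(V=b_n\), \(K=a_n^3+b_n^3\) by the computation in the proof of Lemma \ref{lem:mordelltransform}. That computation gives \(432S^2(4S-(U+V)^2)=1296(U-V)^2S^2\) with \(S=U^2-UV+V^2\), and it is valid for any integers. This makes the specialization claim independent of the function-field bookkeeping. No step is a genuine obstacle; the only care needed is to justify \(r_n\ne0\) and to track the weights \(4\) and \(6\) in the rescaling.
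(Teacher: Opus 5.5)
Your proposal is correct and follows essentially the same route as the paper. You substitute \(u=s_n/r_n\), clear denominators by \(r_n^2\), use \(r_n^4G(u_n)=a_n^2-a_nb_n+b_n^2\), and rescale the section \((12G,36(p-q)G)\) by \(r_n^4\) in \(x\) and \(r_n^6\) in \(y\); your added check that \(r_n\ne0\), via the sign argument in Theorem~\ref{thm:near-miss}, supplies a point the paper leaves implicit.
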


\begin{proof}
The first two identities follow directly from the definitions:
\[
 r_n^2p(s_n/r_n)=r_n^2-13r_ns_n+26s_n^2=a_n,
\]
\[
 r_n^2q(s_n/r_n)=6r_n^2+182s_n^2=b_n.
\]
Cubing and adding gives \(r_n^6K(u_n)=a_n^3+b_n^3\).  Since \(G=p^2-pq+q^2\), we have
\[
 r_n^4G(u_n)=a_n^2-a_nb_n+b_n^2.
\]
The displayed Mordell point is therefore the specialization of \((12G,36(p-q)G)\), scaled by \(r_n^4\) in the \(x\)-coordinate and by \(r_n^6\) in the \(y\)-coordinate.
\end{proof}

Thus the K3 surface constructed below is attached not to the full five-cube near miss directly, but to the distinguished two-cube component \(a_n^3+b_n^3\) arising from the same quadratic identity.  The remaining three visible cubes and the target cube enter through the conic identity, while the hidden term is forced to be \((-1)^n\) by the Pell orbit.

By Lemma \ref{lem:mordelltransform}, the decomposition \(K=p^3+q^3\) gives a section
\begin{equation}\label{eq:sectionP}
P=(x_P,y_P)=\bigl(12G,\,36(p-q)G\bigr)
\end{equation}
on the elliptic curve
\begin{equation}\label{eq:mordellcurve}
E_K:\quad y^2=x^3-432K(u)^2
\end{equation}
over \(\mathbb Q(u)\).

\begin{theorem}\label{thm:k3surface}
Let \(\mathscr X\) be the minimal smooth projective model over \(\mathbb Q\) of the elliptic surface \eqref{eq:mordellcurve} over \(\mathbb P^1_u\).  Then \(\mathscr X\) is an elliptic K3 surface.  Over \(\overline{\mathbb Q}\), the singular fibres consist of six fibres of Kodaira type \(IV\).  Over \(\mathbb Q\), the singular-fibre divisor is supported at the degree-two closed point defined by \(F\) and the degree-four closed point defined by \(G\).  In particular,
\[
 \Triv(\mathscr X_{\Qbar})\cong U\oplus A_2^{\oplus6}.
\]
\end{theorem}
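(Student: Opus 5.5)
We apply Tate's algorithm to the Weierstrass model \(y^2=x^3+a_6(u)\) with \(a_6(u)=-432K(u)^2\). The discriminant is \(\Delta=-27\cdot 16\,a_6^2\), a nonzero constant times \(K(u)^4\). Here \(a_4=0\), so \(j=0\) identically.

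The first step is to show that the model is minimal at every finite place and that \(\deg a_6\le 12\). The polynomial \(a_6\) has degree exactly \(12\), since \(\deg K=6\) with leading coefficient \(6046144\ne0\). So the equation defines a Weierstrass model over \(\mathbb P^1\) with \(a_i\) of degree at most \(6i\), i.e.\ with Euler characteristic index \(\chi=2\). Minimality at a finite place \(u_0\) would fail only if \(\operatorname{ord}_{u_0}(a_6)\ge6\). But \(\operatorname{ord}_{u_0}a_6=2\operatorname{ord}_{u_0}K\), and \(K=FG\) is squarefree: \(\Disc(F)\ne0\), \(\Disc(G)\ne0\), and \(\Resultant(F,G)\ne0\) by the certificate table. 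Hence \(\operatorname{ord}_{u_0}a_6\in\{0,2\}\), and the model is minimal with \(\operatorname{ord}_{u_0}\Delta=4\) at each root of \(K\).

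The second step is the fibre types. At each of the six geometric roots \(u_0\) of \(K\) we have \(\operatorname{ord}(a_4)=\infty\), \(\operatorname{ord}(a_6)=2\), \(\operatorname{ord}(\Delta)=4\). Tate's algorithm for \(j=0\) models \(y^2=x^3+a_6\) then gives Kodaira type \(IV\), a fibre of three components.

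At infinity we pass to the chart \(v=1/u\) with \(a_6'(v)=v^{12}a_6(1/v)\). Its value at \(v=0\) is \(-432\cdot 6046144^2\ne0\), so the fibre at infinity is smooth. Away from the roots of \(K\), \(\Delta\) is a nonzero unit, so those fibres are smooth too. Over \(\Qbar\) the singular fibres are therefore exactly six fibres of type \(IV\). The Euler number check is \(6\cdot 4=24=12\chi\).

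The third step is to identify the surface as K3. For a relatively minimal elliptic surface with section, \(\chi(\mathcal O)=\chi=2\) and \(K_{\mathscr X}=\pi^*\mathcal O_{\mathbb P^1}(\chi-2)=0\). The surface is non-isotrivial in the sense of having singular fibres, since it has discriminant zeros, so \(q=0\). Hence \(\mathscr X_{\Qbar}\) is an elliptic K3 surface. This is the standard criterion (Miranda, Shioda) for Weierstrass data of degree at most \(12\), not at most \(6\), with some place where the order is at least \(1\).

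The fourth step concerns the arithmetic over \(\mathbb Q\). The singular fibres lie over \(V(F)\cup V(G)\). By Lemma \ref{lem:irreducibleFG} these are irreducible over \(\mathbb Q\), so they are closed points of degrees \(2\) and \(4\).

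Finally, the trivial lattice is generated by the zero section, the fibre class, and the non-identity components of the reducible fibres. Each \(IV\) fibre contributes an \(A_2\), so \(\Triv(\mathscr X_{\Qbar})\cong U\oplus A_2^{\oplus6}\).

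The main obstacle is bookkeeping rather than ideas. We must verify minimality and the absence of a singular fibre at infinity, which comes down to checking the exact degree and the squarefreeness of \(K\). The certificate table supplies exactly these facts.
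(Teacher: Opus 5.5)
Your proposal is correct and follows essentially the same route as the paper. Both arguments use squarefreeness of \(K=FG\) from the discriminant and resultant certificates, Tate's algorithm with \(v(a_6)=2\) and \(v(\Delta)=4\) to get type \(IV\) at the six roots, the smooth fibre at infinity, the canonical bundle formula from \(\deg a_6=12\), Lemma \ref{lem:irreducibleFG} for the two closed points, and the identification \(U\oplus A_2^{\oplus6}\); your explicit minimality check and the Euler-number check \(6\cdot4=24\) are useful additions. One terminology correction: since \(j\equiv0\) the surface \emph{is} isotrivial, so the reason for \(q=0\) is non-triviality of the fibration (singular fibres force the fundamental line bundle to have positive degree), not non-isotriviality.
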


\begin{proof}
First we check that \(K\) is square-free.  From \eqref{eq:Kfactor},
\[
 K=FG,
\]
with \(F\) quadratic and \(G\) quartic.  Appendix \ref{app:verification} records
\[
 \Disc(F)\ne0,
 \qquad
 \Disc(G)\ne0,
 \qquad
 \Resultant(F,G)\ne0.
\]
Hence \(F\) and \(G\) are square-free and coprime, so \(K\) has six distinct roots over \(\overline{\mathbb Q}\).  By Lemma \ref{lem:irreducibleFG}, over \(\mathbb Q\) these roots occur in the two Galois orbits determined by the quadratic factor \(F\) and the quartic factor \(G\).

Homogenize \(K\) to the binary sextic
\[
 K_6(U,V)=V^6K(U/V).
\]
The global Weierstrass equation has the form
\[
 y^2=x^3-432K_6(U,V)^2,
\]
where the coefficient of \(x^0\) is a section of \(\mathcal O_{\mathbb P^1}(12)\).  Thus the associated line bundle in the Weierstrass model has degree \(2\).  The leading coefficient of \(K(u)\) is nonzero.  In the local coordinate \(z=1/u\) at infinity, the homogenized sextic satisfies \(K_6(1,z)=6046144+O(z)\), so the fibre over \(z=0\) is the smooth cubic \(y^2=x^3-432\cdot6046144^2\).

At a simple zero of \(K\), the coefficient \(a_6=-432K^2\) has valuation \(2\), while the discriminant
\[
 \Delta=-432^3K^4
\]
has valuation \(4\).  In characteristic zero, Tate's algorithm gives Kodaira type \(IV\).  Since \(K\) has six simple roots, the geometric singular fibres are exactly \(6IV\).

The canonical bundle formula for an elliptic surface with section over \(\mathbb P^1\) gives
\[
 K_{\mathscr X}\cong f^*(K_{\mathbb P^1}\otimes \mathcal O_{\mathbb P^1}(2))\cong \mathcal O_{\mathscr X}.
\]
Moreover \(q(\mathscr X)=0\) because the base is \(\mathbb P^1\).  Hence \(\mathscr X\) is a K3 surface.  Finally, each fibre of type \(IV\) contributes an \(A_2\) root lattice, and the fibre class together with the zero section contributes \(U\).  Therefore
\[
 \Triv(\mathscr X_{\Qbar})\cong U\oplus A_2^{\oplus6}.
\]
\end{proof}

\begin{lemma}\label{lem:localheight}
Let \(v\) be a simple zero of \(K\).  The fibre of \(\mathscr X\) over \(v\) is of type \(IV\).  A section reducing to a nonsingular point of the plane cubic \(y^2=x^3\) has local height correction zero at \(v\).  A section reducing to the singular point \((0,0)\) meets a non-identity component on the minimal regular model and has local height correction \(2/3\).
\end{lemma}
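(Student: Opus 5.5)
The plan is to work locally at a simple zero \(v\) of \(K\), with uniformizer \(\pi=u-v\), and to resolve the Weierstrass model explicitly. Since \(K\) has a simple zero at \(v\), we write \(K=\pi\kappa\) with \(\kappa(v)\neq0\). The local equation is then
\[
y^2=x^3-432\kappa^2\pi^2 .
\]
We have \(v(a_6)=2\) and \(v(\Delta)=4\), with \(a_1=a_2=a_3=a_4=0\). Tate's algorithm in characteristic zero (already quoted in Theorem \ref{thm:k3surface}) then gives type \(IV\). Explicitly, the special fibre is the cuspidal cubic \(y^2=x^3\). Its only singular point \((0,0)\) is where the total space of the Weierstrass model is singular. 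A local check at \((x,y,\pi)=(0,0,0)\) shows the singularity is a rational double point of type \(A_2\), since the equation \(y^2=x^3-c\pi^2\) is, after the change of variables \(\pi\mapsto \pi\sqrt{c}\) (harmless over \(\Qbar\)), \(y^2+\pi^2=x^3\). This is an \(A_2\) singularity, and its minimal resolution inserts two \((-2)\)-curves. Together with the strict transform of the cuspidal curve, these form the three concurrent components of a type \(IV\) fibre.

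The height statements then follow from the standard description of the component group. For type \(IV\) the group of components of the Néron model is \(\mathbb Z/3\mathbb Z\), and the local correction term in Shioda's height pairing is \(0\) for the identity component and \(2/3\) for either non-identity component. These are the entries of the inverse of the \(A_2\) Cartan matrix (see \cite{Shioda1990}). It remains to match reduction behaviour with components.

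\textbf{Case 1: smooth reduction.} A section reducing to a nonsingular point of \(y^2=x^3\) meets the fibre away from the blown-up point. Since the resolution is an isomorphism there, the section meets the strict transform of the cubic. This is the identity component \(\Theta_0\), because the zero section meets it at the point at infinity. The correction is therefore \(0\).

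\textbf{Case 2: reduction to \((0,0)\).} The section passes through the \(A_2\) point, so on the resolution it must meet an exceptional curve. It cannot meet the intersection point of the components, since a section meets a fibre with multiplicity one and so meets a single simple component transversally. Hence it meets a non-identity component \(\Theta_1\) or \(\Theta_2\), and the correction is \(2/3\).

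The main point requiring care is the identification of the \(A_2\) singularity, and the resulting fact that a section through the singular point really lands on a non-identity component. For a fully rigorous statement I would either perform the two explicit blow-ups of \(y^2+\pi^2=x^3\), or cite Tate's algorithm together with the Kodaira--Néron correspondence. The latter says that points of \(E(\widehat{\mathcal O}_v)\) reducing to the singular point are exactly those not in the identity component \(E_0\), and that \(E/E_0\cong\mathbb Z/3\mathbb Z\) for type \(IV\). I would take the citation route (\cite{Silverman1994}, Ch.~IV), which is cleaner.
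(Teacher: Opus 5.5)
Your proposal is correct and follows essentially the same route as the paper's proof: the local equation with \(v(a_6)=2\) and \(v(\Delta)=4\), Tate's algorithm to get type \(IV\), smooth reduction meaning the identity component and reduction to the cusp meaning a non-identity component, and Shioda's correction \(2/3\). Your explicit \(A_2\) local model \(y^2+\pi^2=x^3\), and your multiplicity-one argument that a section through the cusp must land on \(\Theta_1\) or \(\Theta_2\), only fill in standard facts that the paper quotes without proof.
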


\begin{proof}
Choose a local parameter \(t\) at \(v\).  Since \(K\) has a simple zero at \(v\), the local equation has the form
\[
 y^2=x^3+c^2t^2\varepsilon(t),
 \qquad c\ne0,
 \qquad \varepsilon(0)\ne0.
\]
Thus \(v(a_6)=2\) and \(v(\Delta)=4\).  Tate's algorithm in characteristic zero gives Kodaira type \(IV\).  For a type \(IV\) fibre, the identity component is met precisely by sections reducing to nonsingular points of the original cubic, while sections reducing to the cusp \((0,0)\) meet one of the two non-identity components after resolution.  Shioda's local correction table for type \(IV\) gives correction \(2/3\) for a non-identity component and correction zero for the identity component.
\end{proof}

\begin{theorem}\label{thm:height}
The section \(P\) in \eqref{eq:sectionP} has canonical height
\[
 \langle P,P\rangle=\frac43
\]
in the Mordell--Weil lattice of \(\mathscr X_{\Qbar}\).  In particular, \(P\) is non-torsion.
\end{theorem}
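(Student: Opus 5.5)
We will compute the height with Shioda's formula on the K3 surface $\mathscr X$ (Euler characteristic $\chi=2$):
\[
\langle P,P\rangle = 2\chi + 2(P\cdot O) - \sum_v \mathrm{contr}_v(P) = 4 + 2(P\cdot O) - \sum_v \mathrm{contr}_v(P).
\]
Two inputs are needed: the intersection number $P\cdot O$ and the local corrections at the six fibres of type $IV$ from Theorem \ref{thm:k3surface}. The correction values themselves come from Lemma \ref{lem:localheight}.

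\textbf{Step 1: $P\cdot O=0$.} In the Weierstrass model $y^2=x^3-432K_6^2$ with line bundle degree $2$, a section $(x,y)$ meets $O$ exactly where $x$ has a pole, or where $\deg x>4$ at infinity. Here $x_P=12G$ is a polynomial of degree $4$ in $u$. It stays integral at every finite place. At $u=\infty$, the weighted coordinate $x/u^{4}$ is finite, since $x$ has weight $2\cdot 2=4$. Hence $P$ never meets the zero section, and $P\cdot O=0$.

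\textbf{Step 2: the local corrections.} Let $v$ be a root of $F$. There $p=-q$, so $p-q=2p$. Also $p(v)\ne0$, because $\Resultant(p,q)\ne0$ forces $p,q$ to have no common root. Now $G=p^2-pq+q^2=3p^2\neq0$ at $v$. Thus $x_P(v)=12G(v)\neq0$, so $P$ reduces to a nonsingular point of the cuspidal cubic $y^2=x^3$, and the correction at $v$ is $0$.

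Let $v$ be a root of $G$. Then $x_P(v)=0$ and $y_P(v)=0$, so $P$ reduces to the cusp $(0,0)$. Since the fibre is of type $IV$, this reduction means $P$ meets a non-identity component, and by Lemma \ref{lem:localheight} the correction is $2/3$. We need the Weierstrass equation to be minimal at $v$. It is, because $v(\Delta)=4<12$. Then $x_P$ vanishing, to any order, already forces $P$ through the singular point, so no further check is required.

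\textbf{Step 3: assembly.} $G$ has four distinct geometric roots and $F$ has two, by the discriminant and resultant data used in Theorem \ref{thm:k3surface}. Therefore
\[
\langle P,P\rangle = 4+0-4\cdot\tfrac23-2\cdot 0=\tfrac43.
\]
Since this is nonzero, and torsion sections have height $0$, $P$ is non-torsion.

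\textbf{Main obstacle.} The step needing the most care is Step 1, the behaviour at $u=\infty$. We must confirm that the degree bound $\deg x_P\le 4=2\cdot\chi$ is the right criterion for the degree-$2$ Weierstrass model. Writing $x=u^4\tilde x$ and $y=u^6\tilde y$ in the chart $z=1/u$, we get $\tilde x=12G(u)/u^4\to 12\cdot 29068\neq 0$, which is finite, and the fibre at infinity is smooth. So $P$ reduces to a finite affine point there and does not meet $O$.
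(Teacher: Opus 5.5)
Your proposal is correct and follows essentially the same route as the paper. Both arguments use Shioda's formula with $\chi=2$ and $P\cdot O=0$ (since $x_P=12G$ has degree $4$), assign zero correction at the two roots of $F$ and correction $2/3$ at each of the four roots of $G$ where $P$ passes through the cusp, and so obtain $4-8/3=4/3$. Your extra checks (the chart at $u=\infty$, the identity $G=3p^2$ on $F=0$ via $\Resultant(p,q)\neq 0$, and minimality from $v(\Delta)=4$) simply make explicit what the paper states more briefly.
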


\begin{proof}
We use Shioda's height formula for elliptic surfaces.  Since \(\mathscr X\) is a K3 surface, \(\chi(\mathcal O_{\mathscr X})=2\).  The section \(P\) is represented in the global Weierstrass model by finite affine coordinates, hence it is disjoint from the zero section.  Therefore
\[
 \langle P,P\rangle=2\chi(\mathcal O_{\mathscr X})-
 \sum_v \operatorname{contr}_v(P)
 =4-
 \sum_v \operatorname{contr}_v(P).
\]

The singular fibres occur at the roots of \(K=FG\).  The following table summarizes the local behaviour of \(P\):
\[
\begin{array}{@{}c c c c@{}}
\toprule
\text{locus} & \#\text{ fibres} & P\text{ reduces to} & \operatorname{contr}_v(P)\\
\midrule
F=0 & 2 & x_P=12G\ne0 & 0\\
G=0 & 4 & (x_P,y_P)=(0,0) & 2/3\\
\bottomrule
\end{array}
\]
At a root of \(F=p+q\), the value of \(G=p^2-pq+q^2\) is nonzero, so the section reduces to a nonsingular point of the plane cubic and contributes zero by Lemma \ref{lem:localheight}.  At a root of \(G\), the section reduces to the singular point of the plane cubic; by Lemma \ref{lem:localheight}, each such fibre contributes \(2/3\).  Since \(G\) has four distinct roots, the total correction is
\[
4\cdot \frac23=\frac83.
\]
Hence
\[
 \langle P,P\rangle=4-\frac83=\frac43.
\]
This is positive, so \(P\) is non-torsion.
\end{proof}

The surface has \(j\)-invariant \(0\).  Over \(\mathbb Q(\sqrt{-3})\), let \(\omega\) be a primitive cube root of unity.  The generic fibre admits the automorphism
\[
 \varrho:(x,y,u)\longmapsto (\omega x,y,u).
\]

\begin{corollary}\label{cor:CMlattice}
Over \(\mathbb Q(\sqrt{-3})(u)\), the two sections \(P\) and \(\varrho(P)\) generate a rank-two sublattice of the Mordell--Weil lattice with Gram matrix
\[
\begin{pmatrix}
4/3 & -2/3\\
-2/3 & 4/3
\end{pmatrix}.
\]
Equivalently, the Mordell--Weil lattice contains the sublattice \((2/3)A_2\).
\end{corollary}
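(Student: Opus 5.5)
The Gram matrix comes from three facts: \(\varrho\) preserves the height pairing, \(\langle P,P\rangle=4/3\) by Theorem \ref{thm:height}, and \(\langle P,\varrho(P)\rangle=-2/3\) is computed from Shioda's bilinear height formula.

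\emph{Step 1 (\(\varrho\) is an isometry).} The automorphism \(\varrho\) of the generic fibre fixes the zero section. It extends to an automorphism of \(\mathscr X_{\Qbar}\) preserving the fibration, the zero section and each singular fibre, and acts on \(\MW\) as a group automorphism. Since the height pairing is defined intersection-theoretically, \(\varrho\) preserves it. Hence \(\langle \varrho P,\varrho P\rangle=4/3\).

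\emph{Step 2 (\(\langle P,\varrho P\rangle\) via \(1+\varrho+\varrho^2=0\)).} Because \(1+\omega+\omega^2=0\), the three points \(P,\varrho P,\varrho^2P\) have \(x\)-coordinates \(12G,12\omega G,12\omega^2G\) and a common \(y\)-coordinate. They are therefore the three intersections of the horizontal line \(y=36(p-q)G\) with the cubic. Thus \(P+\varrho P+\varrho^2P=0\), so \(\varrho^2P=-(P+\varrho P)\). Taking heights,
\[
\tfrac43=\langle P+\varrho P,P+\varrho P\rangle=\tfrac83+2\langle P,\varrho P\rangle,
\]
which gives \(\langle P,\varrho P\rangle=-2/3\).

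\emph{Step 3 (direct cross-check).} I would also check this with Shioda's formula
\[
\langle P,Q\rangle=\chi+P\cdot O+Q\cdot O-P\cdot Q-\sum_v\operatorname{contr}_v(P,Q),
\]
with \(\chi=2\).
\begin{itemize}
\item \(P\cdot O=\varrho P\cdot O=0\), since both sections are polynomial.
\item \(P\) and \(\varrho P\) meet where \(12G=12\omega G\), which happens at the four roots of \(G\). There both reduce to the cusp, so the meeting takes place on the singular fibres and is accounted for in the component data. The affine intersection number should be computed on the resolved model, where the sections pass through components rather than the cusp.
\item At each root of \(G\), the local behaviour of \(P\) and \(\varrho P\) decides whether they lie on the same or on different non-identity components of the \(IV\) fibre. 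The correction is \(2/3\) for the same component and \(1/3\) for different components.
\end{itemize}
Matching the target, \(2-0-4\cdot\tfrac23=-\tfrac23\) requires the same component at every root of \(G\) with \(P\cdot\varrho P=0\). I expect this component bookkeeping to be the main obstacle, which is why I rely on the group-law argument of Step 2 as the primary proof. It needs only the elementary three-point collinearity and Theorem \ref{thm:height}.

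\emph{Step 4 (rank two and the identification).} The Gram determinant is \(16/9-4/9=4/3\neq0\). Hence \(P\) and \(\varrho P\) are linearly independent and span a rank-two lattice. The matrix equals \(\tfrac23\begin{pmatrix}2&-1\\-1&2\end{pmatrix}\), which is \((2/3)A_2\). Both sections are defined over \(\Kone=\mathbb Q(\sqrt{-3})(u)\), since \(\omega\in\mathbb Q(\sqrt{-3})\).
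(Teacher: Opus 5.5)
Your proof is correct and follows the paper's argument. Both use three ingredients: $\varrho$ is an isometry of the height pairing, $P+\varrho P+\varrho^2P=O$, and $\langle P,P\rangle=4/3$. These force $\langle P,\varrho P\rangle=-2/3$ and a positive determinant. The only differences are cosmetic: you obtain the relation from the horizontal line $y=36(p-q)G$ rather than from $1+\varrho+\varrho^2=0$, and you expand the height of $\varrho^2P=-(P+\varrho P)$ rather than pairing the relation with $P$.

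Your Step~3 cross-check is not needed, though its expectation is right. Because $P$ and $\varrho P$ share the same $y$-coordinate, they lie on the same non-identity component of each type $IV$ fibre over $G=0$. They meet that component at distinct points, so $P\cdot\varrho P=0$ and $2-4\cdot\tfrac23=-\tfrac23$.
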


\begin{proof}
The automorphism \(\varrho\) is an isometry for the canonical height pairing.  Since \(\varrho\) satisfies
\[
1+\varrho+\varrho^2=0
\]
as an endomorphism of the generic fibre, we have
\[
 P+\varrho(P)+\varrho^2(P)=O.
\]
Pairing this relation with \(P\) gives
\[
 \langle P,P\rangle+
 \langle P,\varrho(P)\rangle+
 \langle P,\varrho^2(P)\rangle=0.
\]
The two mixed terms are equal by \(\varrho\)-invariance and symmetry, so Theorem \ref{thm:height} gives
\[
 \langle P,\varrho(P)\rangle=-\frac12\langle P,P\rangle=-\frac23.
\]
Thus the displayed matrix follows.  Its determinant is positive, so the two sections are independent over \(\mathbb Z\).
\end{proof}

\section{A reciprocal involution and a cyclic cubic quotient}\label{sec:reciprocal}

The K3 surface has an additional symmetry which is already visible in the two quadratic polynomials \(p\) and \(q\).  Define
\begin{equation}\label{eq:iota}
 \iota(u)=\frac{u+3}{91u-1}
\end{equation}
and
\begin{equation}\label{eq:mu}
 \mu(u)=\frac{274}{(91u-1)^2}.
\end{equation}
The matrix
\[
 \begin{pmatrix}1&3\\91&-1\end{pmatrix}
\]
has square \(274I\), so \(\iota\) is an involution of \(\mathbb P^1\).

\begin{lemma}\label{lem:scaling}
The identities
\[
 p(\iota(u))=\mu(u)p(u),
 \qquad
 q(\iota(u))=\mu(u)q(u)
\]
hold.  Consequently
\[
 K(\iota(u))=\mu(u)^3K(u)
\]
and
\[
 G(\iota(u))=\mu(u)^2G(u).
\]
\end{lemma}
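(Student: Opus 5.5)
Write \(\iota(u)=(u+3)/(91u-1)\) and clear denominators.  Since \(p\) and \(q\) are quadratic, the identity \(p(\iota(u))=\mu(u)p(u)\) is equivalent to the polynomial identity
\[
 p_2(u+3,\,91u-1)=274\,p(u),
\]
where \(p_2(X,Y)=Y^2p(X/Y)=Y^2-13XY+26X^2\) is the homogenization.  Similarly the claim for \(q\) becomes \(q_2(u+3,91u-1)=274\,q(u)\) with \(q_2(X,Y)=6Y^2+182X^2\).  So the plan is to verify these two quadratic polynomial identities by comparing the coefficients of \(u^2,u,1\).

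For \(q\), we get
\[
6(91u-1)^2+182(u+3)^2=6(8281u^2-182u+1)+182(u^2+6u+9).
\]
The \(u^2\) coefficient is \(49686+182=49868=274\cdot182\).  The \(u\) coefficient is \(-1092+1092=0\).  The constant term is \(6+1638=1644=274\cdot6\).  This matches \(274\,q(u)=274(6+182u^2)\).

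For \(p\), we get
\[
(91u-1)^2-13(u+3)(91u-1)+26(u+3)^2 .
\]
The \(u^2\) coefficient is \(8281-1183+26=7124=274\cdot26\).  The \(u\) coefficient is \(-182-13(-1+273)+156=-182-3536+156=-3562=274\cdot(-13)\).  The constant term is \(1+39+234=274\).  This matches \(274\,p(u)\).

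A more conceptual way to see both identities is to note that the substitution \((X,Y)\mapsto(X+3Y,\,91X-Y)\), whose matrix squares to \(274I\), scales the pencil spanned by \(p_2\) and \(q_2\).  This is consistent with \(\iota\) being the deck involution of \(p/q\).  The direct coefficient check above is enough, though.

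The consequences then follow formally.  Cubing and adding gives \(K(\iota u)=p(\iota u)^3+q(\iota u)^3=\mu^3(p^3+q^3)=\mu^3K\).  Likewise \(G=p^2-pq+q^2\) is homogeneous of degree two in \((p,q)\), so \(G(\iota u)=\mu^2G(u)\).  The same argument gives \(F(\iota u)=\mu F(u)\), which is consistent with \(K=FG\) from \eqref{eq:Kfactor}.

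There is no real obstacle here.  The only point needing care is the arithmetic check that every coefficient scales by the same factor \(274\).  This common factor is exactly the scalar appearing in the matrix square \(274I\), and the identities hold as rational functions wherever \(91u\neq1\).
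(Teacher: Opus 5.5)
Your proposal is correct and is the same argument as the paper: substitute $\iota(u)=(u+3)/(91u-1)$ into $p$ and $q$, clear the denominator $(91u-1)^2$, and observe the common factor $274$. The statements for $K$ and $G$ then follow by homogeneity in $(p,q)$; your explicit coefficient checks, all of which are correct, fill in the arithmetic that the paper leaves implicit.
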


\begin{proof}
Substitution of \(\iota(u)=(u+3)/(91u-1)\) into \(p\) and \(q\) gives
\[
 p(\iota(u))=\frac{274}{(91u-1)^2}p(u),
 \qquad
 q(\iota(u))=\frac{274}{(91u-1)^2}q(u).
\]
The assertions for \(K=p^3+q^3\) and \(G=p^2-pq+q^2\) follow immediately.
\end{proof}

\begin{theorem}\label{thm:antisymplectic}
The rule
\begin{equation}\label{eq:surfaceinvolution}
 (u,x,y)
 \longmapsto
 \left(\iota(u),\,\mu(u)^2x,\,\mu(u)^3y\right)
\end{equation}
defines a birational involution of the Weierstrass surface \eqref{eq:mordellcurve}.  It induces a biregular involution of the K3 surface \(\mathscr X\).  If
\[
 \Omega=\frac{du\wedge dx}{y}
\]
is the standard holomorphic two-form on the smooth locus, then
\[
 \iota^*\Omega=-\Omega.
\]
Thus the induced involution on \(\mathscr X\) is anti-symplectic.
\end{theorem}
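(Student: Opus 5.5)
The plan is to check three things directly from Lemma \ref{lem:scaling}: that \eqref{eq:surfaceinvolution} preserves the Weierstrass equation, that it squares to the identity, and how it transforms \(\Omega\). Biregularity on \(\mathscr X\) will then follow from general K3 theory.

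\emph{The map preserves the surface.} Write \((u',x',y')=(\iota(u),\mu^2x,\mu^3y)\) with \(\mu=\mu(u)\). Lemma \ref{lem:scaling} gives \(K(\iota(u))=\mu^3K(u)\). Hence
\[
 x'^3-432K(u')^2=\mu^6\bigl(x^3-432K(u)^2\bigr)=\mu^6y^2=y'^2 .
\]
So the rule defines a rational self-map of the Weierstrass surface \eqref{eq:mordellcurve}. It is defined wherever \(91u\ne1\).

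\emph{The map is an involution.} The key identity is
\[
 91\iota(u)-1=\frac{91(u+3)-(91u-1)}{91u-1}=\frac{274}{91u-1}.
\]
From it, \(\mu(\iota(u))=(91u-1)^2/274=\mu(u)^{-1}\). Applying the map twice therefore gives
\[
 (u,x,y)\mapsto\bigl(\iota^2(u),\,\mu(\iota u)^2\mu(u)^2x,\,\mu(\iota u)^3\mu(u)^3y\bigr)=(u,x,y),
\]
using that \(\iota^2=\mathrm{id}\) because the matrix squares to \(274I\). So the map is a birational involution.

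\emph{Biregularity on \(\mathscr X\).} The map induces a birational automorphism of \(\mathscr X\), which is a minimal surface with \(K_{\mathscr X}\) trivial by Theorem \ref{thm:k3surface}. For minimal surfaces of non-negative Kodaira dimension, every birational map to another minimal model is an isomorphism, since the minimal model is unique. Hence the induced involution is biregular. Alternatively, one can observe that the map respects the elliptic fibration over the automorphism \(\iota\) of \(\mathbb P^1\). It sends the fibre over \(u\) to the fibre over \(\iota(u)\) by a fibrewise isomorphism of Weierstrass models, so it extends to the relatively minimal smooth model. I would cite the uniqueness-of-minimal-model argument, as it is cleanest.

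\emph{Action on the two-form.} Compute
\[
 d\iota(u)=\frac{(91u-1)-91(u+3)}{(91u-1)^2}\,du=-\mu\,du .
\]
Also \(dx'=\mu^2dx+x\,d(\mu^2)\), and the second term is a multiple of \(du\), so it vanishes after wedging with \(du'\). Therefore
\[
 \frac{du'\wedge dx'}{y'}=\frac{-\mu\cdot\mu^2\,du\wedge dx}{\mu^3y}=-\frac{du\wedge dx}{y}.
\]
Since \(H^0(\mathscr X,\Omega^2)\) is one-dimensional and spanned by the extension of \(\Omega\), the involution acts by \(-1\) on it, i.e.\ it is anti-symplectic.

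I expect the only delicate point to be the biregularity step: making sure the birational involution genuinely extends across the type \(IV\) fibres and the fibres over \(u=1/91\) and \(u=\infty\). Invoking uniqueness of minimal models of K3 surfaces settles this without case analysis.
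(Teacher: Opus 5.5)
Your proposal is correct and follows essentially the same route as the paper: Lemma~\ref{lem:scaling} shows the map preserves the Weierstrass equation, and $\iota^2=1$ together with $\mu(\iota(u))\mu(u)=1$ gives the involution; biregularity on $\mathscr X$ comes from the fact that birational self-maps of a K3 surface are biregular (uniqueness of minimal models), and $\iota'(u)=-\mu(u)$ yields $\iota^*\Omega=-\Omega$. Your explicit check $91\iota(u)-1=274/(91u-1)$ usefully fills in the identity $\mu(\iota(u))\mu(u)=1$, which the paper simply asserts.
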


\begin{proof}
By Lemma \ref{lem:scaling},
\[
 K(\iota(u))^2=\mu(u)^6K(u)^2.
\]
Therefore the transformation \eqref{eq:surfaceinvolution} sends
\[
 y^2=x^3-432K(u)^2
\]
to the same equation over the point \(\iota(u)\).  Since \(\iota^2=1\) and \(\mu(\iota(u))\mu(u)=1\), it is an involution.  A birational self-map of a smooth projective K3 surface is biregular, so the map extends to the minimal smooth model.

It remains to compute the action on \(\Omega\).  We have
\[
 \iota'(u)=-\frac{274}{(91u-1)^2}=-\mu(u).
\]
Pulling back \(du\wedge dx/y\) under \eqref{eq:surfaceinvolution}, the term involving \(d\mu\) in \(d(\mu^2x)\) vanishes after wedging with \(du\).  Thus
\[
 \iota^*\Omega
 =\frac{\iota'(u)\,du\wedge \mu(u)^2dx}{\mu(u)^3y}
 =\frac{\iota'(u)}{\mu(u)}\Omega
 =-\Omega.
\]
\end{proof}

The same reciprocal symmetry appears on the cyclic cubic cover associated with \(K\).  Let \(\mathcal C\) be the smooth projective model of
\begin{equation}\label{eq:cycliccover}
 w^3=K(u).
\end{equation}

\begin{lemma}\label{lem:deckinvolution}
The polynomials \(p(u)\) and \(q(u)\) have no common zero.  The morphism
\[
 \varphi:\mathbb P^1_u\longrightarrow\mathbb P^1,\qquad u\longmapsto [p(u):q(u)]
\]
has degree two, and its nontrivial deck involution is \(\iota(u)=(u+3)/(91u-1)\).
\end{lemma}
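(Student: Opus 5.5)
The lemma makes three claims: \(p\) and \(q\) have no common zero, \(\varphi\) has degree two, and its deck involution is \(\iota\). I will prove them in that order, using only the resultant table and Lemma \ref{lem:scaling}.

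For the first claim I use the certificate \(\Resultant(p,q)=185224\ne0\) from the table above. Both \(p\) and \(q\) have exact degree two, since their leading coefficients are \(26\) and \(182\). So the nonvanishing resultant means they share no root in \(\overline{\mathbb Q}\). It also means they share no root at \(u=\infty\), where the homogenized forms \(V^2p(U/V)\) and \(V^2q(U/V)\) take the values \(26\) and \(182\). Hence \([p:q]\) is a base-point-free pencil of binary quadratics. The map \(\varphi\) is therefore a morphism defined by coprime forms of degree two, so it has degree exactly two. The map is not constant, because \(p/q\) is not constant: \(p\) has a linear term and \(q\) does not.

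For the deck involution I invoke Lemma \ref{lem:scaling}. The identities \(p(\iota(u))=\mu(u)p(u)\) and \(q(\iota(u))=\mu(u)q(u)\) give
\[
[p(\iota(u)):q(\iota(u))]=[p(u):q(u)]
\]
wherever \(\mu(u)\) is finite and nonzero. This holds on a dense open set, so \(\varphi\circ\iota=\varphi\) as morphisms. We already know \(\iota^2=1\), because the matrix has square \(274I\). I also need \(\iota\ne\mathrm{id}\), which holds since \(\iota(0)=-3\ne0\).

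It follows that \(\iota\) lies in the group of deck transformations of the degree-two map \(\varphi\). That group has order at most \(\deg\varphi=2\), since a generic fibre has two points and the group acts freely on it. The only nontrivial element is therefore \(\iota\). As a concrete cross-check, the fibre over \([1:6]\) is \(\{0,-3\}\): indeed \(q-6p=78u\) vanishes only at \(u=0\) among finite points, and \(\iota(0)=-3\).

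I expect no real obstacle here. The step needing the most care is the reliance on the claimed scaling identities in Lemma \ref{lem:scaling}. A direct check is to expand \((91u-1)^2p\bigl((u+3)/(91u-1)\bigr)\) and compare it with \(274\,p(u)\). The expansion gives
\[
(u+3)^2-13(u+3)(91u-1)+26(91u-1)^2=274(26u^2-13u+1),
\]
and the analogous expansion for \(q\) works the same way. If one prefers not to cite the lemma, this short expansion is the whole computation.
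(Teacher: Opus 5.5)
Your argument is correct, but it identifies the deck involution by a different route from the paper. The paper works from the single bilinear certificate
\[
 p(v)q(u)-p(u)q(v)=-26(u-v)(91uv-u-v-3).
\]
This shows directly that, for generic \(u\), the fibre of \(\varphi\) through \(u\) is exactly \(\{u,\iota(u)\}\). Degree two and the identification of the deck involution therefore come out together, and the correspondence \(91uv-u-v-3=0\) is literally the graph of \(\iota\).

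You instead argue in three steps. You get the degree from base-point freeness of a pencil of binary quadratics, and you correctly check the point at infinity, which the paper leaves implicit in the resultant. You get \(\varphi\circ\iota=\varphi\) from the scaling identities of Lemma~\ref{lem:scaling}. You then conclude from the fact that the deck group has order at most \(\deg\varphi\). Your route is more structural, and it reuses the multiplier \(\mu\) that the paper needs anyway for the surface involution and the cyclic cover. The paper's route does not depend on Lemma~\ref{lem:scaling}, and it gives the pointwise description of fibres that is used again in the degree count of Theorem~\ref{thm:cyclicquotient}.

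Two arithmetic slips in your cross-checks should be fixed.

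First, your displayed expansion has \(u+3\) and \(91u-1\) interchanged. Since \(p(v)=1-13v+26v^2\), the correct identity is
\[
(91u-1)^2-13(u+3)(91u-1)+26(u+3)^2=274(26u^2-13u+1).
\]
As written, your left side equals \(74\) at \(u=0\), not \(274\).

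Second, \(q-6p=26u^2+78u=26u(u+3)\), not \(78u\). The factor \(u+3\) is what puts \(-3\) in the fibre over \([1:6]\); with \(78u\) your own claim that this fibre is \(\{0,-3\}\) would be contradicted.

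Neither slip affects your main argument, because that argument cites Lemma~\ref{lem:scaling}, which the paper proves independently of this lemma. However, your alternative of avoiding the lemma only works with the corrected expansion.
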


\begin{proof}
The base-point assertion follows from
\[
 \Resultant(p,q)=185224\ne0.
\]
For a second variable \(v\), one has the exact factorization
\begin{equation}\label{eq:deckfactor}
 p(v)q(u)-p(u)q(v)=-26(u-v)(91uv-u-v-3).
\end{equation}
Thus, away from the diagonal and the finite exceptional set, the second point in the fibre of \(p/q\) over \(u\) is determined by
\[
 91uv-u-v-3=0,
\]
namely
\[
 v=\frac{u+3}{91u-1}=\iota(u).
\]
This proves that \(\varphi\) has degree two and that \(\iota\) is its nontrivial deck involution.
\end{proof}

\begin{theorem}\label{thm:cyclicquotient}
The curve \(\mathcal C\) has genus \(4\).  The rule
\[
 \mathcal C\longrightarrow E_\omega,
 \qquad
 (u,w)\longmapsto [p(u):q(u):w],
\]
where
\[
 E_\omega:\quad X^3+Y^3=Z^3,
\]
defines a degree-two morphism of smooth projective curves.  Moreover the involution
\[
 \widetilde\iota:(u,w)\longmapsto (\iota(u),\mu(u)w)
\]
has quotient isomorphic to \(E_\omega\).  Consequently \(E_\omega\) is an isogeny factor of \(J(\mathcal C)\).
\end{theorem}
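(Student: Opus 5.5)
The genus is computed first, by Riemann--Hurwitz for the cyclic cubic cover \(\mathcal C\to\mathbb P^1_u\), \((u,w)\mapsto u\). Theorem \ref{thm:k3surface} shows that \(K\) has six distinct simple roots over \(\Qbar\). Since \(\deg K=6\) is divisible by \(3\), the cover is unramified over \(u=\infty\): there \(w^3\sim 6046144\,u^6\) has three distinct branches. So the cover is totally ramified at exactly six points, each with ramification index \(3\), and
\[
2g-2=3(-2)+6\cdot 2=6,
\qquad\text{so } g=4 .
\]

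Next I check the map to the Fermat cubic. Since \(K=p^3+q^3\), the assignment \((u,w)\mapsto[p(u):q(u):w]\) lands on \(X^3+Y^3=Z^3\). By Lemma \ref{lem:deckinvolution}, \(p\) and \(q\) have no common zero, so the three coordinates never vanish simultaneously on the affine part. At infinity I rescale by \(u^{-2}\), using that \(w/u^2\) is regular and nonzero there and that \((26,182,w/u^2)\) is a valid point. The rational map from the smooth curve \(\mathcal C\) then extends to a morphism.

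The degree is computed from the factorization \(\mathcal C\to E_\omega\to\mathbb P^1\). Here the second map is the projection \([X:Y:Z]\mapsto[X:Y]\), of degree \(3\). The composite \(\mathcal C\to\mathbb P^1_u\xrightarrow{\varphi}\mathbb P^1\) has degree \(3\cdot 2=6\) by Lemma \ref{lem:deckinvolution}. Hence \(\deg(\mathcal C\to E_\omega)=6/3=2\), provided the map is nonconstant, which is clear since \(p/q\) is nonconstant.

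For the quotient statement I first verify, using Lemma \ref{lem:scaling}, that \(\widetilde\iota\) preserves \(\mathcal C\):
\[
(\mu w)^3=\mu^3K(u)=K(\iota(u)).
\]
It is an involution because \(\mu(\iota(u))\mu(u)=1\). It also commutes with the map to \(E_\omega\), since
\[
[p(\iota u):q(\iota u):\mu w]=[\mu p:\mu q:\mu w]=[p:q:w].
\]
So \(\mathcal C\to E_\omega\) factors through \(\mathcal C/\langle\widetilde\iota\rangle\). The involution \(\widetilde\iota\) is nontrivial, because it already acts nontrivially on \(u\). Therefore \(\mathcal C\to\mathcal C/\langle\widetilde\iota\rangle\) has degree \(2\), and comparing degrees shows that the induced map \(\mathcal C/\langle\widetilde\iota\rangle\to E_\omega\) has degree \(1\), i.e.\ it is an isomorphism of smooth projective curves.

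Finally, a nonconstant morphism \(\mathcal C\to E_\omega\) induces a surjection \(J(\mathcal C)\to E_\omega\) by Albanese functoriality, with pullback giving the complementary injection up to isogeny. By Poincaré reducibility, \(E_\omega\) is then an isogeny factor of \(J(\mathcal C)\).

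The only delicate point is checking that the maps are honest morphisms near \(u=\infty\) and the points of \(\mathcal C\) over the exceptional set of \(\iota\), namely \(u=1/91\) and \(\infty\). I would handle this by appealing to the general fact that rational maps from smooth projective curves extend, so that the degree count above suffices.
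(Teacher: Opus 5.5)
Your proposal is correct and follows essentially the same route as the paper: Riemann--Hurwitz for the genus, base-point-freeness (including the rescaling at \(u=\infty\)), factorization through \(\mathcal C/\langle\widetilde\iota\rangle\) followed by a degree comparison, and Albanese functoriality for the isogeny factor. The one variation is how you get \(\deg(\mathcal C\to E_\omega)=2\): you use multiplicativity of degrees, since composing with the degree-\(3\) projection \([X:Y:Z]\mapsto[X:Y]\) gives \(\varphi\circ\pi\), of degree \(6\); the paper instead counts the preimages of a generic point of \(E_\omega\) directly, and both arguments are valid.
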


\begin{proof}
The map \(\mathcal C\to\mathbb P^1_u\) has degree \(3\).  The polynomial \(K\) is square-free of degree \(6\).  Thus the six finite roots of \(K\) are totally ramified with ramification index \(3\), and there is no ramification at infinity because \(3\mid 6\).  Riemann--Hurwitz gives
\[
 2g(\mathcal C)-2=3(-2)+6(3-1)=6,
\]
so \(g(\mathcal C)=4\).

The formula \((u,w)\mapsto[p(u):q(u):w]\) satisfies the Fermat equation because
\[
 p(u)^3+q(u)^3=K(u)=w^3.
\]
It has no base point on the affine model: if \(w=0\), then \(K(u)=0\), and \(p(u)\) and \(q(u)\) cannot both vanish by Lemma \ref{lem:deckinvolution}.  At the points above \(u=\infty\), the functions \(p,q,w\) have the same pole order two; multiplying the three coordinates by a square of a local parameter gives a regular nonzero triple.  Hence the formula extends to a morphism from the smooth projective model \(\mathcal C\).

By Lemma \ref{lem:scaling}, the map
\[
 \widetilde\iota:(u,w)\mapsto(\iota(u),\mu(u)w)
\]
preserves the equation \(w^3=K(u)\).  It also fixes the projective point \([p(u):q(u):w]\), because \(p,q,w\) are all multiplied by the same factor \(\mu(u)\).  Thus the morphism factors through \(\mathcal C/\langle\widetilde\iota\rangle\).

The degree is two.  Indeed, a generic point \([X:Y:Z]\in E_\omega\) with \(YZ\ne0\) determines the value of \(p(u)/q(u)=X/Y\); by Lemma \ref{lem:deckinvolution}, this gives exactly two values of \(u\), interchanged by \(\iota\).  Once \(u\) is fixed, the equality \([p(u):q(u):w]=[X:Y:Z]\) determines \(w\).  Therefore the induced map
\[
 \mathcal C/\langle\widetilde\iota\rangle\longrightarrow E_\omega
\]
is finite and birational.  Since both curves are smooth and projective, it is an isomorphism.

The induced map on Jacobians gives a nonzero homomorphism \(J(\mathcal C)\to E_\omega\).  Its dual embeds \(E_\omega\) into \(J(\mathcal C)\) up to isogeny, so \(E_\omega\) is an isogeny factor.
\end{proof}

By Lemma \ref{lem:mordelltransform} with \(K=1\), the Fermat cubic \(E_\omega:X^3+Y^3=Z^3\) is birational, hence isomorphic as a smooth projective curve, to
\[
 E_0:\quad Y^2=X^3-432.
\]
This identifies the Fermat quotient with the constant \(j=0\) elliptic curve used in the Mordell--Weil computation below.

\begin{theorem}\label{thm:torsion}
Let \(k\) be a field of characteristic zero.  If \(\sqrt{-3}\notin k\), then
\[
 E_K(k(u))_{\rm tors}=0.
\]
If \(\sqrt{-3}\in k\), then
\[
 E_K(k(u))_{\rm tors}\cong \mathbb Z/3\mathbb Z,
\]
generated by
\[
 (x,y)=(0,12\sqrt{-3}\,K(u)).
\]
In particular,
\[
 E_K(\Kzero)_{\rm tors}=0,
\]
\[
 E_K(\Kone)_{\rm tors}\cong \mathbb Z/3\mathbb Z,
\]
and
\[
 E_K(\Kgeom)_{\rm tors}\cong \mathbb Z/3\mathbb Z.
\]
\end{theorem}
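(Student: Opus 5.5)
The plan is to bound the torsion geometrically first, and then decide which torsion points are rational over \(k(u)\) by a direct computation.

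\textbf{Step 1: the geometric torsion injects into a finite group.} Over \(\bar k(u)\), torsion sections of an elliptic surface with at least one singular fibre inject into the component group of each singular fibre. Theorem \ref{thm:k3surface} shows that every singular fibre is of type \(IV\), so each component group is \(\mathbb Z/3\mathbb Z\). The standard fact I will invoke is that a torsion section is disjoint from the zero section. By the Shioda height formula, a nonzero torsion section then satisfies
\[
0=4-\sum_v\operatorname{contr}_v,
\]
with each \(\operatorname{contr}_v\in\{0,2/3\}\). Consequently it meets a non-identity component at exactly six fibres, i.e.\ at all of them. Hence the specialization map from \(E_K(\bar k(u))_{\rm tors}\) to the component group of any single \(IV\) fibre is injective. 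This gives
\[
E_K(\bar k(u))_{\rm tors}\hookrightarrow \mathbb Z/3\mathbb Z.
\]
As a cross-check I could instead use the general bound on torsion of elliptic K3 surfaces with trivial lattice \(U\oplus A_2^6\): the torsion order squared divides \(\operatorname{disc}(A_2^6)=3^6\), and injectivity into a single \(\mathbb Z/3\) pins it down.

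\textbf{Step 2: exhibit the \(3\)-torsion section.} The point \((0,\pm12\sqrt{-3}K)\) lies on the curve, since \(y^2=-432K^2\). Its tangent line is horizontal, because \(dy/dx=3x^2/(2y)=0\) at \(x=0\). That horizontal line meets the cubic \(x^3=0\) triply, so the point is an inflection point and hence has order \(3\). It is defined over \(k(u)\) exactly when \(\sqrt{-3}\in k\). Combined with Step 1, this gives \(\mathbb Z/3\mathbb Z\) whenever \(\sqrt{-3}\in k\), and in particular over \(\Kone\) and \(\Kgeom\).

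\textbf{Step 3: the case \(\sqrt{-3}\notin k\).} Any torsion point over \(k(u)\) lies in the geometric torsion group, which by Steps 1 and 2 is \(\{O,(0,\pm12\sqrt{-3}K)\}\). The two nonzero points have \(y\)-coordinate not in \(k(u)\): if \(12\sqrt{-3}K\in k(u)\) with \(K\neq0\), then \(\sqrt{-3}\in k(u)\cap\bar k=k\), a contradiction. So the torsion is trivial, which covers \(\Kzero\).

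\textbf{Main obstacle.} The delicate point is Step 1, namely justifying that torsion sections are disjoint from the zero section and that the height formula applies over \(\bar k\). I would cite the standard result that the narrow Mordell--Weil lattice is torsion-free and that torsion sections never meet the zero section on a surface with \(\chi=2\). A fallback is purely algebraic: a nonzero torsion point of \(y^2=x^3+c\) over \(\bar k(u)\) with \(c=-432K^2\) non-constant has order \(2\) or \(3\). Order \(2\) would need a root \(x\) of \(x^3=432K^2\), which is impossible since \(K^2\) is not a cube, \(K\) being square-free of degree \(6\). Order \(3\) forces \(x=0\), or \(x^3=-4c\), which again fails since \(K^2\) is not a cube. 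This also recovers the bound in Step 1.
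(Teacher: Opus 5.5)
Your proof is correct, but it takes a different route from the paper's. The paper bounds torsion by descent, not by heights. Over the cyclic cover $\mathcal C: w^3=K(u)$, the substitution $x=w^2X$, $y=w^3Y$ turns $E_K$ into the constant curve $E_0: Y^2=X^3-432$. A torsion section then becomes a constant point $(X_0,Y_0)\in E_0(\bar k)_{\rm tors}$. Descent back to $\bar k(u)$ forces $X_0=0$ because $w^2\notin\bar k(u)$, which leaves only $O$ and $(0,\pm12\sqrt{-3}\,K)$. That argument uses nothing about the surface beyond $K$ not being a cube. It disposes of all torsion orders at once, and it is the same descent dictionary that drives Theorem~\ref{thm:mwcontrol}.

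You instead use the fibre configuration $6IV$ from Theorem~\ref{thm:k3surface}, together with $\chi=2$ and the corrections of Lemma~\ref{lem:localheight}. From these you show that a nonzero torsion section must meet a non-identity component in all six fibres, so geometric torsion injects into a single $\mathbb Z/3\mathbb Z$. This is heavier, since it depends on the K3 and Kodaira analysis. It does buy extra geometric information: the $3$-torsion section is disjoint from $O$ and passes through the cusp of every singular fibre, consistent with $x=0$, $y=\pm12\sqrt{-3}\,K$. Your Steps 2 and 3 (the flex point, and $\sqrt{-3}\in k(u)\cap\bar k=k$) coincide in substance with the paper's final Galois step.

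Three small points.
\begin{enumerate}
\item The obstacle you flag is not really one. For $P\ne O$ one has $(P\cdot O)\ge 0$, so $0=4+2(P\cdot O)-\sum_v\operatorname{contr}_v(P)$ together with $\sum_v\operatorname{contr}_v(P)\le 6\cdot\frac23=4$ already forces $(P\cdot O)=0$ and all six corrections to equal $2/3$. No external disjointness theorem is needed.
\item Your opening sentence is false as a general statement: in general, torsion embeds only into $\prod_v\Phi_v$. The Hesse pencil, with four $I_3$ fibres and torsion $(\mathbb Z/3\mathbb Z)^2$, is a counterexample. Your height computation does, however, prove the single-fibre injectivity in the present case.
\item The fallback does not stand on its own. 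The division-polynomial computation classifies points of order $2$ and $3$. But the claim that every nonzero torsion point has order $2$ or $3$, which excludes for instance order $9$ or prime orders $\ell\ge5$, is exactly what needs proof. Something like the paper's constancy argument on $\mathcal C$ would be required to supply it.
\end{enumerate}
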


\begin{proof}
We first work over an algebraic closure \(\bar k\) of \(k\).  On the cyclic cubic cover
\[
 \mathcal C:\quad w^3=K(u),
\]
the substitution
\[
 x=w^2X,\qquad y=w^3Y
\]
identifies the base change of \(E_K\) with the constant curve
\[
 E_0:\quad Y^2=X^3-432.
\]
Let \(S\) be a torsion section of \(E_K\) over \(\bar k(u)\).  After base change to \(\bar k(\mathcal C)\), it gives a torsion point of \(E_0(\bar k(\mathcal C))\).  If its order divides \(m\), then it defines a morphism from the connected smooth curve \(\mathcal C\) to the finite group scheme \(E_0[m]\); hence it is constant.  Thus the base-changed point is some \((X_0,Y_0)\in E_0(\bar k)_{\rm tors}\).

For this constant point to descend to \(\bar k(u)\), the coordinates
\[
 x=w^2X_0,\qquad y=w^3Y_0=K(u)Y_0
\]
must lie in \(\bar k(u)\).  Since \(K\) is square-free, it is not a cube in \(\bar k(u)\).  Also \(w^2\notin \bar k(u)\): otherwise \(w=K/w^2\) would lie in \(\bar k(u)\), contradicting \(w^3=K\) with \(K\) not a cube.  Therefore \(X_0=0\).  The equation of \(E_0\) then gives
\[
 Y_0^2=-432.
\]
Consequently over \(\bar k(u)\) the only torsion sections are the identity and
\[
 (x,y)=(0,\pm 12\sqrt{-3}\,K(u)).
\]
Let \(\omega\) be a primitive cube root of unity and let \(\varrho(X,Y)=(\omega X,Y)\) on \(E_0\).  The two points with \(X=0\) have exact order three; they form the nonzero part of the cyclic subgroup
\[
 E_0[1-\varrho]=\{O,(0,12\sqrt{-3}),(0,-12\sqrt{-3})\}.
\]
Although \(E_0[3](\bar k)\) has additional points, the descent condition above has already forced \(X_0=0\).  Hence no other geometric \(3\)-torsion point, and no torsion point of any other order, can descend to a section over \(\bar k(u)\).  The subgroup defined over \(k(u)\) is the Galois-fixed part of \(E_0[1-\varrho]\).  It is trivial if \(\sqrt{-3}\notin k\) and is \(\mathbb Z/3\mathbb Z\) if \(\sqrt{-3}\in k\).  Taking \(k=\mathbb Q\), \(k=\mathbb Q(\sqrt{-3})\), and \(k=\Qbar\) gives the three displayed special cases.
\end{proof}

\begin{corollary}\label{cor:shiodatate}
The geometric N\'eron--Severi rank satisfies
\[
 \rho(\mathscr X_{\Qbar})
 =14+\operatorname{rank}\,\MW(E_K(\Kgeom))
 \ge 16.
\]
Let \(M_{\rm vis}=\langle P,\varrho(P)\rangle\) be the visible rank-two Mordell--Weil sublattice over \(\Kgeom\), and let \(L_{\rm vis}\subseteq\NS(\mathscr X_{\Qbar})\) be the visible generated sublattice obtained from the trivial lattice, the geometric order-three torsion section, and \(M_{\rm vis}\).  Then \(L_{\rm vis}\) has rank \(16\) and discriminant \(-108\).  We do not assert here that \(L_{\rm vis}\) is primitive or full in \(\NS(\mathscr X_{\Qbar})\).
\end{corollary}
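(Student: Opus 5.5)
The plan is to combine the Shioda--Tate formula with Shioda's discriminant formula for sublattices of \(\NS\) generated by the trivial lattice together with a group of sections. Everything needed is in Theorem \ref{thm:k3surface}, Theorem \ref{thm:height}, Corollary \ref{cor:CMlattice} and Theorem \ref{thm:torsion}.

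\emph{Rank statement.} By Theorem \ref{thm:k3surface}, \(\Triv(\mathscr X_{\Qbar})\cong U\oplus A_2^{\oplus 6}\), which has rank \(2+6\cdot 2=14\). The Shioda--Tate theorem for the elliptic K3 surface \(\mathscr X_{\Qbar}\to\mathbb P^1\) with zero section gives
\[
\rho(\mathscr X_{\Qbar})=\operatorname{rank}\Triv+\operatorname{rank}E_K(\Kgeom)=14+\operatorname{rank}\MW(E_K(\Kgeom)).
\]
Corollary \ref{cor:CMlattice} shows that \(P\) and \(\varrho(P)\) have a positive definite Gram matrix, so they are independent. Their base changes to \(\Kgeom\) remain independent, since the height pairing is geometric. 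Hence the Mordell--Weil rank is at least \(2\) and \(\rho\ge16\).

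\emph{Discriminant.} Let \(M\subseteq E_K(\Kgeom)\) be the subgroup generated by the order-three section \(T_3=(0,12\sqrt{-3}K)\) of Theorem \ref{thm:torsion} and by \(P,\varrho(P)\). Then \(L_{\rm vis}\) is the sublattice of \(\NS\) generated by \(\Triv\) and the section classes \((S)\) for \(S\in M\). I would check two facts about \(M\).
\begin{itemize}
\item Its torsion subgroup is exactly \(\langle T_3\rangle\cong\mathbb Z/3\mathbb Z\), because the full geometric torsion is \(\mathbb Z/3\mathbb Z\) by Theorem \ref{thm:torsion}.
\item \(M/M_{\rm tors}\) is free of rank two, generated by the images of \(P\) and \(\varrho(P)\).
\end{itemize}
Next I would invoke Shioda's formula
\[
\operatorname{disc}(L_{\rm vis})=\operatorname{disc}(\Triv)\cdot\det\bigl(\langle\,,\rangle|_{M/M_{\rm tors}}\bigr)\big/|M_{\rm tors}|^2 .
\]
This formula comes from the splitting \(L_{\rm vis}\otimes\mathbb Q=(\Triv\otimes\mathbb Q)\oplus\Triv^\perp\): the orthogonal projection onto \(\Triv^\perp\) realises the height pairing on the free part, and the torsion sections enlarge \(\Triv\) to its overlattice with index \(|M_{\rm tors}|\).

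The three inputs are:
\begin{itemize}
\item \(\operatorname{disc}(U)=-1\) and \(\operatorname{disc}(A_2)=3\), so \(\operatorname{disc}(\Triv)=-3^6=-729\);
\item the Gram determinant from Corollary \ref{cor:CMlattice}, namely \(16/9-4/9=4/3\);
\item \(|M_{\rm tors}|^2=9\).
\end{itemize}
These give \(-729\cdot(4/3)/9=-108\). The rank of \(L_{\rm vis}\) is \(14+2=16\), since the torsion section adds no rank.

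\emph{Main obstacle.} The delicate step is justifying the discriminant formula in the presence of torsion. Concretely, one must show that adjoining the class of \(T_3\) to \(\Triv\) produces an overlattice of index exactly \(3\), and not of index \(1\) or \(9\). For this I would use that \(T_3\) meets every one of the six \(IV\) fibres at a non-identity component: its \(x\)-coordinate vanishes and \(y=12\sqrt{-3}K\) vanishes at each root of \(K\), so \(T_3\) reduces to the cusp \((0,0)\) on every singular fibre. Its height is therefore \(4-6\cdot\tfrac23=0\), consistent with \(T_3\) being torsion. Its class then lies in \(\Triv\otimes\mathbb Q\) but not in \(\Triv\), and it has order exactly \(3\) in \(\Triv'/\Triv\). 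This gives the factor \(1/9\) on the discriminant.
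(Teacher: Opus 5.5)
Your proposal is correct and is essentially the paper's argument. Shioda--Tate with \(\operatorname{rank}\Triv=14\) gives the rank statement, and Shioda's discriminant formula with \(\Disc\Triv=-3^6\), Gram determinant \(4/3\) and torsion order \(3\) gives \(-729\cdot\tfrac43/9=-108\); your derivation of that formula for the subgroup \(M\) and your height-zero check for \(T_3\) are sound extras, and the index-\(3\) step you flag as the main obstacle is immediate from \(\NS/\Triv\cong\MW\) together with \(T_3\) having exact order \(3\).
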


\begin{proof}
Theorem \ref{thm:k3surface} gives
\[
 \Triv(\mathscr X_{\Qbar})\cong U\oplus A_2^{\oplus6},
\]
which has rank \(14\).  The Shioda--Tate formula gives the displayed equality.  Corollary \ref{cor:CMlattice} gives a rank-two Mordell--Weil sublattice, hence the lower bound and the visible rank \(16\).

The trivial lattice has discriminant \(-3^6\), and \(M_{\rm vis}\) has height-pairing determinant \(4/3\).  The geometric torsion subgroup over \(\Kgeom\) has order \(3\) by Theorem \ref{thm:torsion}.  Shioda's discriminant formula for the N\'eron--Severi sublattice attached to this Mordell--Weil subgroup gives
\[
 \frac{(-3^6)(4/3)}{3^2}=-108.
\]
\end{proof}

\begin{remark}
The preceding corollary computes a visible subgroup of the geometric Mordell--Weil group and the corresponding visible N\'eron--Severi sublattice.  We do not assert that this Mordell--Weil subgroup is saturated, nor that \(L_{\rm vis}\) has index one in the full geometric N\'eron--Severi group.  The remaining rank problem is equivalent, by Theorem \ref{thm:mwcontrol} below, to determining whether the complementary factor of \(J(\mathcal C)\) contains an additional copy of the \(j=0\) elliptic curve \(E_0\) in the required equivariant isotypic component.
\end{remark}

\begin{theorem}\label{thm:mwcontrol}
Let \(k\) be an algebraically closed field of characteristic zero containing \(\omega\).  Let \(\tau\) be the automorphism of \(\mathcal C\) given by \(w\mapsto\omega w\), and let \(\varrho\) be the automorphism of
\[
 E_0:\quad Y^2=X^3-432
\]
given by \((X,Y)\mapsto(\omega X,Y)\).  There is a natural isomorphism
\[
 E_K(k(u))/E_K(k(u))_{\rm tors}
 \cong
 \{\varphi\in\Hom_k(J(\mathcal C),E_0):\varphi\circ\tau_* = \varrho\circ\varphi\}.
\]
Thus the free geometric Mordell--Weil group is exactly the displayed equivariant Hom module.
\end{theorem}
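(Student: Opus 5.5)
The plan is to use the standard correspondence between sections of an isotrivial (twisted) elliptic surface and equivariant morphisms from the trivializing cover, as in the torsion argument of Theorem \ref{thm:torsion}.

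\textbf{Step 1: sections as equivariant morphisms.} Over \(k(\mathcal C)\) the substitution \(x=w^2X\), \(y=w^3Y\) identifies \(E_K\) with the constant curve \(E_0\). Hence
\[
E_K(k(u))\cong\{Q\in E_0(k(\mathcal C)) : Q \text{ is fixed by the twisted Galois action}\}.
\]
Here \(\mathrm{Gal}(k(\mathcal C)/k(u))=\langle\tau\rangle\). A section \((x,y)\) gives \(X=x/w^2\) and \(Y=y/w^3\). Applying \(\tau\) sends \(w\mapsto\omega w\), so \(\tau^*X=\omega^{-2}X=\omega X\) and \(\tau^*Y=Y\). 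The fixed condition is therefore \(Q\circ\tau=\varrho\circ Q\), where \(Q\) is viewed as a morphism \(\mathcal C\to E_0\); I would double-check the exponent convention (\(\omega\) versus \(\omega^2\)) at this point. Conversely, any morphism \(Q:\mathcal C\to E_0\) with \(Q\circ\tau=\varrho\circ Q\) gives \(x=w^2X\circ Q\) and \(y=w^3Y\circ Q\), which are \(\tau\)-invariant and so lie in \(k(u)\). This yields a group isomorphism \(E_K(k(u))\cong \mathrm{Mor}_\tau(\mathcal C,E_0)\), with group law taken pointwise.

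\textbf{Step 2: from morphisms to homomorphisms.} Fix a base point \(c_0\in\mathcal C\) fixed by \(\tau\), for instance a point over a root of \(K\) (a total ramification point). The Albanese property gives
\[
\mathrm{Mor}(\mathcal C,E_0)=E_0(k)\oplus\Hom(J(\mathcal C),E_0),\qquad Q\mapsto \bigl(Q(c_0),\varphi_Q\bigr),
\]
where \(Q(c)=Q(c_0)+\varphi_Q([c-c_0])\). Because \(\tau c_0=c_0\), the Abel--Jacobi map intertwines \(\tau\) with \(\tau_*\). The equivariance condition then splits into two parts: \(\varrho(Q(c_0))=Q(c_0)\), and \(\varphi_Q\circ\tau_*=\varrho\circ\varphi_Q\). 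The constant parts satisfying the first condition are exactly \(E_0[1-\varrho]\), which matches the torsion subgroup found in Theorem \ref{thm:torsion}.

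\textbf{Step 3: quotient by torsion.} The subgroup of constant equivariant morphisms is \(E_0[1-\varrho]\cong E_K(k(u))_{\rm tors}\). Taking the quotient therefore gives the claimed isomorphism onto the equivariant Hom module. Naturality follows because all maps are canonical once \(c_0\) is fixed. Changing \(c_0\) to another \(\tau\)-fixed point alters the constant part only by an element of \(E_0[1-\varrho]\), so the induced map on the quotient is independent of this choice.

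The main obstacle is Step 2. One must check that the torsion-free part really equals the Hom module, and not merely a finite-index submodule of it. This requires verifying that every equivariant \(\varphi\) lifts: \(Q=\varphi\circ\mathrm{AJ}_{c_0}\) is automatically equivariant because \(c_0\) is \(\tau\)-fixed. One must also confirm that the decomposition respects the group structure. I would check both points carefully, since they are what makes the correspondence an isomorphism rather than an injection.
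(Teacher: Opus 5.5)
Your proposal is correct and follows the paper's route: identify sections with morphisms \(f:\mathcal C\to E_0\) satisfying \(f\circ\tau=\varrho\circ f\), pass to \(\Hom_k(J(\mathcal C),E_0)\) by the Albanese property, and identify the kernel with the constant equivariant maps \(E_0[1-\varrho]\). The only difference is how surjectivity is obtained. You take a \(\tau\)-fixed ramification point as base point, so the equivariant morphisms split as \(E_0[1-\varrho]\oplus\{\varphi:\varphi\circ\tau_*=\varrho\circ\varphi\}\) and \(\varphi\circ\mathrm{AJ}_{c_0}\) is automatically equivariant; this also shows directly that the torsion is exactly the constant part. The paper instead uses an arbitrary base point and restores equivariance by translating by a point \(c\) with \((1-\varrho)c=-d\), using that the isogeny \(1-\varrho\) is surjective.
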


\begin{proof}
After the base change \(w^3=K(u)\), the substitution
\[
 x=w^2X,
 \qquad
 y=w^3Y
\]
identifies \(E_K\) with the constant curve \(E_0\).  A section of \(E_K\) over \(k(u)\) therefore gives a morphism \(f:\mathcal C\to E_0\) satisfying the descent condition
\[
\begin{array}{ccc}
\mathcal C & \xrightarrow{\ f\ } & E_0\\
\tau\downarrow && \downarrow\varrho\\
\mathcal C & \xrightarrow{\ f\ } & E_0.
\end{array}
\]
Equivalently,
\[
 f\circ\tau=\varrho\circ f.
\]
Writing \(f=(X,Y)\), this condition is precisely
\[
 X\circ\tau=\omega X,\qquad Y\circ\tau=Y,
\]
which is the condition that the functions \(x=w^2X\) and \(y=w^3Y\) descend to \(k(u)\).  Conversely, any such equivariant morphism descends to a section of \(E_K\) over \(k(u)\).

Since \(k\) is algebraically closed, choose a \(k\)-rational base point on \(\mathcal C\).  The usual Albanese property gives
\[
 E_0(k(\mathcal C))/E_0(k)\cong \Hom_k(J(\mathcal C),E_0).
\]
The quotient by constant maps makes the choice of base point irrelevant.  Passing to this quotient sends an equivariant morphism \(f\) to a homomorphism \(\varphi\) satisfying
\[
 \varphi\circ\tau_* = \varrho\circ\varphi.
\]
This gives a map from \(E_K(k(u))\) to the displayed Hom module, with kernel the constant equivariant sections.

It remains to check surjectivity and identify the kernel.  Let \(\varphi\) be an element of the displayed Hom module, and choose a rational map \(f:\mathcal C\dashrightarrow E_0\) inducing \(\varphi\).  Since \(\mathcal C\) is smooth projective and \(E_0\) is proper, this rational map extends to a morphism.  Then
\[
 f\circ\tau-\varrho\circ f
\]
induces the zero map on \(J(\mathcal C)\), so it is a constant point \(d\in E_0(k)\).  Replacing \(f\) by \(f+c\) changes this constant to
\[
 d+c-\varrho(c)=d+(1-\varrho)c.
\]
Since \(1-\varrho\) is a nonzero endomorphism of the elliptic curve \(E_0\), it is an isogeny, hence surjective over the algebraically closed field \(k\).  We may choose \(c\) with \((1-\varrho)c=-d\), and then \(f+c\) is equivariant.  This proves surjectivity.

The constant equivariant sections are precisely
\[
 E_0[1-\varrho]=\{O,(0,12\sqrt{-3}),(0,-12\sqrt{-3})\}.
\]
These are the fixed points of \(\varrho\) on \(E_0(k)\).  They form the \((1-\varrho)\)-torsion subgroup, not the full group \(E_0[3](k)\).  Under the descent correspondence they give exactly the geometric torsion sections of \(E_K\) computed in Theorem \ref{thm:torsion}.  Hence the kernel is exactly \(E_K(k(u))_{\rm tors}\), and the asserted isomorphism follows.
\end{proof}

\begin{corollary}\label{cor:rankequivalence}
With notation as in Theorem \ref{thm:mwcontrol}, the assertion
\[
 \operatorname{rank} \MW(E_K(\Kgeom))=2
\]
is equivalent to the assertion that the equivariant Hom module
\[
 \{\varphi\in\Hom_{\Qbar}(J(\mathcal C),E_0):\varphi\circ\tau_*=\varrho\circ\varphi\}
\]
has \(\mathbb Z\)-rank \(2\).  The two independent sections \(P\) and \(\varrho(P)\) give a visible rank-two submodule.
\end{corollary}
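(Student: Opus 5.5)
The corollary follows formally from Theorem \ref{thm:mwcontrol}, so the plan is a short rank comparison. Take \(k=\Qbar\), which is algebraically closed, has characteristic zero and contains \(\omega\). Theorem \ref{thm:mwcontrol} then gives an isomorphism of abelian groups
\[
E_K(\Kgeom)/E_K(\Kgeom)_{\rm tors}\cong \{\varphi\in\Hom_{\Qbar}(J(\mathcal C),E_0):\varphi\circ\tau_*=\varrho\circ\varphi\}.
\]

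First I check that both sides are finitely generated free \(\mathbb Z\)-modules. The left side is the quotient of a finitely generated abelian group by its torsion subgroup; finite generation is Mordell--Weil over function fields, or equivalently Shioda--Tate for the K3 surface \(\mathscr X\) of Theorem \ref{thm:k3surface}. The right side is a subgroup of \(\Hom_{\Qbar}(J(\mathcal C),E_0)\), which is free of finite rank. By definition, \(\operatorname{rank}\MW(E_K(\Kgeom))\) is the rank of \(E_K(\Kgeom)\), and this equals the rank of its torsion-free quotient. An isomorphism of free modules preserves \(\mathbb Z\)-rank, so the two assertions are equivalent.

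For the last sentence, Corollary \ref{cor:CMlattice} and Theorem \ref{thm:height} show that \(P\) and \(\varrho(P)\) are sections over \(\Kone\subset\Kgeom\). Their Gram matrix has determinant \(4/3>0\), so they are linearly independent and their images in \(E_K(\Kgeom)/E_K(\Kgeom)_{\rm tors}\) still have rank two. The height pairing vanishes on torsion, so the independence survives passing to the quotient. Transporting them through the isomorphism gives a rank-two submodule of the equivariant Hom module. Concretely, the image of \(P\) is the homomorphism induced by the morphism \(\mathcal C\to E_0\) given by \((X,Y)=(12G/w^2,\,36(p-q)G/w^3)\).

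The only step needing care is to note that the isomorphism in Theorem \ref{thm:mwcontrol} is a group homomorphism and not merely a bijection of sets. This holds because addition of sections corresponds to pointwise addition of morphisms into \(E_0\), and the Albanese identification \(E_0(k(\mathcal C))/E_0(k)\cong\Hom_k(J(\mathcal C),E_0)\) is additive. With that settled, the rank equivalence is immediate and there is no substantive obstacle.
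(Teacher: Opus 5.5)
Your proposal is correct and takes the same route as the paper. The paper's proof is the one-line remark that the statement is immediate from Theorem~\ref{thm:mwcontrol}, with the visible rank-two submodule supplied by Corollary~\ref{cor:CMlattice}. Your additional checks simply make that remark explicit: finite generation, additivity of the isomorphism, and independence modulo torsion via the positive-definite Gram matrix.
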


\begin{proof}
This is immediate from Theorem \ref{thm:mwcontrol} and Corollary \ref{cor:CMlattice}.
\end{proof}

\begin{remark}
The elliptic curve \(E_\omega\) has complex multiplication by \(\mathbb Z[\omega]\).  Thus the construction contains two independent reciprocal features: the real-quadratic reciprocity behind
\[
 R(q)=(1+q)(1-\alpha q)(1-\alpha^{-1}q),
 \qquad
 \alpha=(8+\sqrt{65})^2,
\]
and the cubic CM symmetry visible in the quotient \(\mathcal C/\langle\widetilde\iota\rangle\cong E_\omega\).
\end{remark}

\section{Scope of the Mordell--Weil computation}

The preceding results determine the fibre configuration, torsion, the height of the distinguished section, and a visible rank-two Eisenstein sublattice of the geometric Mordell--Weil group.  They do not determine the full geometric group \(E_K(\Kgeom)\).  By Shioda--Tate, this is equivalent to determining whether
\[
 \rho(\mathscr X_{\Qbar})=16
\]
or whether additional algebraic cycles occur.  Theorem \ref{thm:mwcontrol} identifies the remaining free Mordell--Weil computation with an explicit equivariant Hom module, namely the \(\varrho\)-equivariant \(E_0\)-isotypic part of \(J(\mathcal C)\).  Thus the final unproved rank invariant is sharply delimited rather than hidden: fullness of the visible Mordell--Weil lattice is equivalent to the assertion that the visible \(E_0\)-factor supplied by the Fermat quotient accounts for all equivariant homomorphisms \(J(\mathcal C)\to E_0\).

There is also a separate saturation question.  Even if one proves \(\rho(\mathscr X_{\Qbar})=16\), the visible rank-16 lattice \(L_{\rm vis}\subseteq \NS(\mathscr X_{\Qbar})\) of discriminant \(-108\) might have finite index.  If that index is \(m\), then
\[
 \Disc \NS(\mathscr X_{\Qbar})=-108/m^2.
\]
Thus rank fullness and lattice saturation are distinct issues.  Since an integral index satisfies \(m^2\mid 108=2^2\cdot3^3\), the only possible indices, if the rank is eventually shown to be \(16\), are
\[
 m\in\{1,2,3,6\}.
\]
Hence saturation is reduced to a finite 2- and 3-primary discriminant-form problem.  To prove primitivity of \(L_{\rm vis}\), it remains to exclude nontrivial isotropic subgroups of the discriminant form of \(L_{\rm vis}\) at the primes \(2\) and \(3\) that are compatible with the fibre components and the section classes.

A concrete route to an upper bound is available but is not carried out here.  One may choose primes of good reduction for \(\mathcal C\), \(E_0\), and \(\mathscr X\), compute the Frobenius polynomial of \(J(\mathcal C)\) or of \(H^2_{\mathrm{\acute et}}(\mathscr X_{\overline{\mathbb F}_p},\mathbb Q_\ell)\), and compare the \(E_0\)-isotypic factors subject to the equivariance condition \(\varphi\circ\tau_* = \varrho\circ\varphi\).  If suitable good primes show that the Fermat quotient supplies the only such \(E_0\)-factor, then the visible Mordell--Weil lattice is full and Shioda--Tate gives \(\rho(\mathscr X_{\Qbar})=16\).  This is a finite computational strategy for the rank problem, not an input to the theorems proved in this paper.

\begin{problem}\label{prob:rank}
Determine whether
\[
 \operatorname{rank}\,\MW(E_K(\Kgeom))=2,
 \qquad
 \rho(\mathscr X_{\Qbar})=16,
\]
and whether the visible lattice \(L_{\rm vis}\) of Corollary \ref{cor:shiodatate} is primitive in \(\NS(\mathscr X_{\Qbar})\).  Equivalently, determine whether the Fermat quotient supplies all homomorphisms in the equivariant Hom module of Theorem \ref{thm:mwcontrol}.  The results of the present paper do not depend on resolving this problem.
\end{problem}

\appendix

\section{Exact verification data}\label{app:verification}

This appendix records the exact computations used in the body of the paper.

\begin{lemma}\label{lem:exactdata}
For
\[
 F(u)=208u^2-13u+7
\]
and
\[
 G(u)=29068u^4+1690u^3+2067u^2+52u+31,
\]
one has
\[
 \Disc(F)=-5655,
\]
\[
 \Disc(G)=4748229647057752128,
\]
and
\[
 \Resultant(F,G)=308771371584.
\]
Furthermore,
\[
 \Resultant(p,q)=185224,
\]
\[
 p(v)q(u)-p(u)q(v)=-26(u-v)(91uv-u-v-3),
\]
and
\[
 G(u)\equiv 3u^4+2u^2+2u+1\pmod 5
\]
and
\[
 \gcd(3u^4+2u^2+2u+1,u^{25}-u)=1
\]
in \(\mathbb F_5[u]\).  In particular, \(K=FG\) is square-free and \(G\) is irreducible over \(\mathbb Q\).
\end{lemma}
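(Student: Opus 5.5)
The statement is Lemma~\ref{lem:exactdata}, a list of exact polynomial computations. I would verify each item directly, organised so that every number can be checked by hand or with a short computer-algebra session. Nothing earlier in the paper is needed beyond the definitions of \(p,q,F,G\).

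\textbf{Discriminant of \(F\).} This is immediate: \(13^2-4\cdot208\cdot7=169-5824=-5655\).

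\textbf{Expansion of \(G\).} I would expand \(G=p^2-pq+q^2\) coefficient by coefficient and match it against the displayed quartic. For example, the leading coefficient is \(26^2-26\cdot182+182^2=676-4732+33124=29068\), and the constant term is \(1-6+36=31\); the middle coefficients are handled the same way.

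\textbf{Resultant of \(p\) and \(q\).} Since both polynomials are quadratics, I would compute it as the \(4\times4\) Sylvester determinant. A more convenient route is \(\Resultant(p,q)=26^2\prod_{p(\theta)=0}q(\theta)\). Reducing \(q\) modulo \(p\) gives a linear remainder \(q\equiv 7\cdot13u-1\) (up to a constant check), after which the product over the two roots of \(p\) is elementary.

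\textbf{Deck factorization.} For \(p(v)q(u)-p(u)q(v)\), I would expand both sides as polynomials in \(u,v\). The left side is antisymmetric in \(u,v\), so it is divisible by \(u-v\). The cofactor is then found by comparing the coefficients of \(u^2v\), \(uv\), \(u\) and the constant term. The relevant monomials are \(26\cdot182\) for \(u^2v^2\) (which cancel), \(-13\cdot182\cdot uv^2\), and so on, giving \(-26(91uv-u-v-3)\).

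\textbf{Discriminant of \(G\) and resultant of \(F\) and \(G\).} I would compute \(\Resultant(F,G)\) as \(208^4\prod_{F(\beta)=0}G(\beta)\). To do this, reduce \(G\) modulo \(F\) to a linear polynomial \(\lambda u+\kappa\) with rational coefficients, and then take the norm from \(\mathbb Q(\sqrt{-5655})\). A cross-check comes from \(G=F^2-3pq\): at a root \(\beta\) of \(F\) we have \(q(\beta)=-p(\beta)\), so \(G(\beta)=3p(\beta)^2\). This gives \(\Resultant(F,G)=3^2\cdot\Resultant(F,p)^2\), up to the leading-coefficient normalisation. The \(\Disc(G)\) value is a large integer, and I would simply compute it as the Sylvester resultant of \(G\) and \(G'\) divided by \(29068\). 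This is the step I expect to be the main obstacle in terms of bookkeeping. The fallback is to state it as a machine-verified value, with the nonvanishing alone being what is actually used later. Nonvanishing can be certified independently by reducing modulo \(5\) and using the irreducibility below, since an irreducible polynomial over \(\mathbb F_5\) is separable.

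\textbf{Reduction of \(G\) modulo \(5\).} Reducing the coefficients gives \(29068\equiv3\), \(1690\equiv0\), \(2067\equiv2\), \(52\equiv2\) and \(31\equiv1\).

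\textbf{The gcd with \(u^{25}-u\).} I would first check that \(\overline G\) has no roots in \(\mathbb F_5\) by evaluating it at \(0,\dots,4\), which gives \(1,3,\dots\), all nonzero. I would then compute \(u^{25}\bmod \overline G\) by repeated fifth powering and run the Euclidean algorithm on \(u^{25}-u\) and \(\overline G\). Alternatively, I would rule out a factorization \((u^2+au+b)(3u^2+cu+d)\) over \(\mathbb F_5\) by comparing coefficients.

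\textbf{Conclusions.} The two final claims then follow. \(K=FG\) is square-free because \(\Disc(F)\), \(\Disc(G)\) and \(\Resultant(F,G)\) are all nonzero. \(G\) is irreducible over \(\mathbb Q\) because its leading coefficient is prime to \(5\) and its reduction modulo \(5\) is irreducible, exactly as in Lemma~\ref{lem:irreducibleFG}.
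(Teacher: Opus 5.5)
Your proposal is correct and is essentially the paper's own argument: each item is verified by direct expansion, by Sylvester determinants or the quartic discriminant formula, and by reduction modulo \(5\), with square-freeness and irreducibility over \(\mathbb Q\) deduced exactly as you do. Your cross-check is exact, not merely up to normalisation: since \(\Resultant(F,p)=\Resultant(p,p+q)=\Resultant(p,q)\), it gives \(\Resultant(F,G)=9\cdot185224^2=308771371584\), and the same idea applied to \(G=(p+\omega q)(p+\omega^2 q)\) gives \(\Disc(G)=(4433^2-4433\cdot3016+3016^2)\cdot 9\cdot185224^2\), which removes the bookkeeping obstacle you anticipated.
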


\begin{proof}
The discriminant of \(F\) is immediate.  The displayed value of \(\Disc(G)\) is obtained by applying the standard quartic discriminant formula to the coefficients of \(G\).  The displayed resultants are obtained from the corresponding Sylvester determinants, and the deck-factor identity follows by expanding both sides.  The nonzero values of \(\Disc(F)\), \(\Disc(G)\), and \(\Resultant(F,G)\) show that \(F\) and \(G\) are square-free and coprime.  The congruence modulo \(5\) is obtained by reducing the coefficients of \(G\).  The displayed greatest common divisor shows that the reduction of \(G\) has no factor of degree one or two over \(\mathbb F_5\); being quartic, it is irreducible over \(\mathbb F_5\).  Hence \(G\) is irreducible over \(\mathbb Q\).
\end{proof}

\begin{lemma}\label{lem:numeratorgcds}
Let \(N_a,N_b,N_c,N_d,N_e,N_t\) be the six numerators in Theorem \ref{thm:generatingfunctions}.  Then
\[
\begin{gathered}
 \gcd(N_a,R)=\gcd(N_b,R)=\gcd(N_c,R)=1,\\
 \gcd(N_d,R)=\gcd(N_e,R)=\gcd(N_t,R)=1
\end{gathered}
\]
in \(\mathbb Q[q]\).  Hence the six displayed rational functions have denominator exactly \(R\).
\end{lemma}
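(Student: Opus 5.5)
The approach is to show directly that \(R\) shares no root with any of the six numerators. Over \(\mathbb Q\) we have the factorization \(R(q)=(1+q)(1-258q+q^2)\), and the quadratic factor is irreducible because its roots \(\alpha^{-1},\alpha=129\pm16\sqrt{65}\) are irrational. So \(\gcd(N,R)=1\) for a numerator \(N\) of degree at most two is equivalent to two checks: \(N(-1)\neq0\), and \(N\) is not a rational multiple of \(1-258q+q^2\). For a quadratic \(N\), the second condition says \(N\) is not proportional to that quadratic. If \(N\) were linear it could not vanish at an irrational root of an irreducible quadratic, so the second condition would be automatic.

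Both checks reduce to simple numerical evaluations.

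\textbf{Evaluation at \(-1\).}
\begin{align*}
N_a(-1)&=1-2-1091=-1092, & N_b(-1)&=6+664-3374=-2704,\\
N_c(-1)&=7+1000-3711=-2704, & N_d(-1)&=167+21376-91015=-69472,\\
N_e(-1)&=326+41728-177670=-135616, & N_t(-1)&=340+43520-185300=-141440.
\end{align*}
All six values are nonzero.

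\textbf{Non-proportionality.} Each \(N\) has constant term \(N(0)\neq0\). If \(N\) were a multiple of \(1-258q+q^2\), that multiple would be \(N(0)(1-258q+q^2)\). This would force the \(q^2\)-coefficient to equal the constant term and the \(q\)-coefficient to equal \(-258N(0)\). Comparing with the six numerators, for instance \(1+2q-1091q^2\) against \(1-258q+q^2\), the \(q^2\)-coefficients differ from the constant terms in every case. So none of the \(N\) is proportional to the quadratic factor.

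A conceptual cross-check is also available. By Theorem \ref{thm:generalpell}, \(u_n\) is a combination \(\lambda\alpha^n+\bar\lambda\alpha^{-n}+\kappa(-1)^n\). The root \(\alpha^{-1}\) of \(R\) is cancelled by \(N\) exactly when the \(\alpha^n\)-coefficient vanishes, and \(-1\) is cancelled exactly when \(\kappa=0\). Since \(t_n\sim c\alpha^n\) with \(c>0\) (Corollary \ref{cor:quality}), and the same growth holds for the other sequences because they are positive definite in \((r_n,s_n)\), we get \(\lambda\neq0\). By Galois conjugation \(\bar\lambda\neq0\) as well, which independently rules out cancellation of the quadratic factor.

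There is no real obstacle; the only care needed is correct arithmetic in the evaluations at \(q=-1\). I would double-check these against the initial values listed in Theorem \ref{thm:generatingfunctions}. The exactness of the denominator \(R\) then follows.
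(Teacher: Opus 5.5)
Your argument is correct and is essentially the paper's proof: the paper also uses \(R(q)=(q+1)(q^2-258q+1)\), tabulates the same six values \(N(-1)\), and shows that each remainder modulo \(q^2-258q+1\) is nonzero. For these quadratic numerators that remainder check is equivalent to your non-proportionality check, and your remark that this quadratic factor is irreducible over \(\mathbb Q\) makes explicit a point the paper leaves implicit.

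One small slip in your optional cross-check: \(a_n=r_n^2-13r_ns_n+26s_n^2\) is indefinite (discriminant \(65\)), not positive definite. The conclusion \(\lambda\neq0\) still holds for \(a_n\), because \(a_n=2r_n^2+52s_n^2-(-1)^n\) grows like \(\alpha^n\).
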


\begin{proof}
Using
\[
 R(q)=(q+1)(q^2-258q+1),
\]
it is enough to check nondivisibility by the two displayed factors.  The following table gives the required data:
\[
\begin{array}{@{}c c c@{}}
\toprule
N & N(-1) & N \bmod (q^2-258q+1)\\
\midrule
N_a & -1092 & -52(5413q-21)\\
N_b & -2704 & -52(16753q-65)\\
N_c & -2704 & -26(36863q-143)\\
N_d & -69472 & -4342(5413q-21)\\
N_e & -135616 & -8476(5413q-21)\\
N_t & -141440 & -8840(5413q-21)\\
\bottomrule
\end{array}
\]
None of the entries in the last two columns is zero.  Hence no numerator is divisible by \(q+1\) or by \(q^2-258q+1\), so each numerator is coprime to \(R(q)\).
\end{proof}

\begingroup\sloppy

\endgroup

\end{document}